\newtheorem{theorem}{Theorem}[section]
\newtheorem{problem}{Problem}
\numberwithin{equation}{section}
\begin{document}
	\title{Inverse obstacle scattering for elastic waves in the time domain}	
	
	\author{Lu Zhao}
	\email{zhaolu18@mails.jlu.edu.cn}
	
	\author{Heping Dong}
	\email{dhp@jlu.edu.cn}
	
	\author{Fuming Ma}
	\email{mfm@jlu.edu.cn}
	
	\thanks{The work of second author was partially supported by the National Natural Science Foundation of China (grants 11801213) and the National Key Research and Development Program of China (grants 2020YFA0713602). The work of third author was partially supported by the National Natural Science Foundation of China (grants 11771180).}
	\keywords{inverse obstacle scattering, time domain, elastic wave equation, boundary integral equations, Helmholtz decomposition, convolution quadrature}

	\begin{abstract}
		This paper concerns an inverse elastic scattering problem which is to determine a rigid obstacle from time domain scattered field data for a single incident plane wave. By using Helmholtz decomposition, we reduce the initial-boundary value problem of the time domain Navier equation to a coupled initial-boundary value problem of wave equations, and prove the uniqueness of the solution for the coupled problem by employing energy method. The retarded single layer potential is introduced to establish the coupled boundary integral equations, and the uniqueness is discussed for the solution of the coupled boundary integral equations. Based on the convolution quadrature method for time discretization, the coupled boundary integral equations are reformulated into a system of boundary integral equations in $s$-domain, and then a convolution quadrature based nonlinear integral equation method is proposed for the inverse problem. Numerical experiments are presented to show the feasibility and effectiveness of the proposed method.
	\end{abstract}
\maketitle
	\section{Introduction}
	The time domain inverse obstacle scattering problem is an important research topic with many potential applications such as sonar detection, geophysical exploration, biomedical imaging and noninvasive detecting. Since the time domain broadband signals can be captured easier and these measurement data contain more information at discrete frequencies compared with frequency domain data, recently, time domain scattering and inverse scattering problems have attracted a lot of attention \cite{ChenBo, Hsiao, Hu, Ji, Liu, LiuHu, Sini, Wang}.

	The time domain inversion algorithms consist of qualitative and quantitative method. The advantage of the qualitative method lies in the fact that it avoids the need of priori information of the obstacle and the solution of a sequence of direct problems. Many numerical methods fall into this category, such as the point source method \cite{Luke}, the probe method \cite{Burkard}, the factorization method \cite{Cakoni}, the enclosure method \cite{Ikehata1}, the strengthened total focusing method \cite{Guo} and the linear sampling method \cite{Chen1,Chen2,Guo1,Guo2,Hadder}. In order to obtain finer reconstruction, a quantitative method, namely convolution quadrature based nonlinear integral equation method, is proposed for the inverse acoustic obstacle scattering problem in \cite{Zhao}. The convolution quadrature method for time discretization was proposed by Lubich \cite{Lubich19881,Lubich19882,Lubich1992,Lubich1994}, and was extended to solve the acoustic scattered field by combining various numerical methods on space discretization \cite{CQ,Dominguez,sayas}. It provides a straightforward way to deal with time variables by using the Laplace transform of the kernel function. The nonlinear integral equation method proposed by Johansson and Sleeman \cite{Johansson} belongs to a simplified Newton method, which also has been applied in phaseless inverse problems \cite{lambda,Dong2019,Dong2020}.
	
	The methods mentioned above are commonly employed for time domain inverse acoustic obstacle scattering problem. In this work, we mainly concern the inverse elastic scattering problem of reconstructing a rigid obstacle from time domain scattered field data for a single incident plane wave. The obstacle is assumed to be embedded in a homogeneous and isotropic medium, and the scattered field data is measured on a circle which has a finite distance away from the obstacle. Motivated by the recent works \cite{Bao,Zhao}, we propose a convolution quadrature based nonlinear integral equation method to solve this inverse problem.
	Specifically, by using Helmholtz decomposition, we convert the model problem into a coupled initial-boundary value problem of wave equations, and we prove the uniqueness of the solution for this coupled problem by employing energy method. Then, we establish coupled boundary integral equations with the help of retarded layer potential and prove the uniqueness of the solution for the coupled boundary integral equations.	
	Based on the convolution quadrature method \cite{CQ, Lubich1994} for time discretization, we reformulate the coupled boundary integral equations into a system of boundary integral equations in $s$-domain and make use of Nystr$\mathrm{\ddot o}$m-type discretization \cite{Dong2019} for the boundary integral equations in $s$-domain. Finally, we combine the nonlinear integral equation method with convolution quadrature technique to reconstruct the obstacle by using the time domain scattered field data for a single incident plane wave. To our best knowledge, this is the first work on inverse obstacle scattering problem for elastic waves in the time domain. The goal of this work is threefold:
	\begin{itemize}
		\item [(1)] prove the uniqueness of the solution for the coupled initial-boundary value problem by employing energy method;
		\item [(2)] establish coupled boundary integral equations by using retarded layer potential and prove the uniqueness of the solution for the coupled boundary integral equations;
		\item [(3)] propose a convolution quadrature based nonlinear integral equation method to reconstruct the obstacle from the time domain scattered field data for a single incident plane wave.
	\end{itemize}
	
	The paper is organized as follows. In section 2, we convert the model problem into a coupled initial-boundary value problem of wave equations and prove the uniqueness for the coupled problem. In section 3, we deduce the coupled boundary integral equations and establish the uniqueness of the solution for the coupled boundary integral equations. Then the convolution quadrature method is applied to obtain a system of boundary integral equations in $s$-domain. Section 4
	presents the Nystr$\mathrm{\ddot o}$m-type discretization of the 
	boundary integral equations in $s$-domain. In section 5, the convolution quadrature based nonlinear integral equation method is proposed to solve the inverse obstacle scattering problem in the time domain. The numerical experiments are shown in section 6 to validate the feasibility of the proposed method. The paper is concluded in section 7.
	\section{Problem formulation}
	Consider a two-dimensional rigid obstacle, which is described as a bounded domain $D \subset \mathbb{R}^2$ with smooth boundary $\partial D$. Denote by $\nu=(\nu_1,\nu_2)^\top$ and $\tau=(\tau_1,\tau_2)^\top$ the unit normal and tangential vectors on $\partial D$, respectively, where $\tau_1=-\nu_2$, $\tau_2=\nu_1$. The exterior domain $\mathbb{R}^2\setminus \overline{D}$ is assumed to be filled with homogeneous and isotropic elastic medium with a unit mass density. Picking an appropriate constant $R>0$, we define $B_R=\left\lbrace \boldsymbol{x}\in\mathbb{R}^2:|\boldsymbol{x}|<R\right\rbrace $ such that $\overline{D}\subset B_R$, and let the boundary $\partial B_R$ be the observation curve.
	
	Let the obstacle be illuminated by a time domain plane wave $\boldsymbol{u}^{\rm inc}$, which satisfies the time domain two-dimensional Navier equation
	\[
	\partial_t^2\boldsymbol{u}^{\rm inc}-\mu\Delta\boldsymbol{u}^{\rm inc}-(\lambda+\mu)\nabla\nabla\cdot\boldsymbol{u}^{\rm inc}=\boldsymbol{0},\quad {\rm in}~\mathbb{R}^2\times(0,\infty),
	\]
	where $\lambda$ and $\mu$ are the Lam$\acute{\rm e}$ constants satisfying $\mu>0$ and $\lambda+\mu>0$. The incident wave can be either a compressional plane wave:
	\[
	\boldsymbol{u}^{\rm inc}(\boldsymbol{x},t)=\boldsymbol{d}f\left( \boldsymbol{x}\cdot \boldsymbol{d}+c_1t-T_0\right)
	\]
	or a shear plane wave:
	\[
	\boldsymbol{u}^{\rm inc}(\boldsymbol{x},t)=\boldsymbol{d}^\perp f\left( \boldsymbol{x}\cdot \boldsymbol{d}+c_2t-T_0\right),
	\]
	where $\boldsymbol{d}=\left( \cos \theta,\sin \theta\right) ^\top$, $\boldsymbol{d}^\perp=\left( -\sin \theta,\cos \theta\right) ^\top$, $\theta \in \left[ 0,2\pi\right) $ is the incident angle, and we denote the wave speed by $c_1=\left( \lambda+2\mu\right) ^{1/2}$ and $c_2=\mu^{1/2}$. In addition, we assume that $f$ is a causal function and $T_0$ is chosen such that the supports of $\boldsymbol{u}^{\rm inc}(\cdot,0)$ and $\partial_t\boldsymbol{u}^{\rm inc}(\cdot,0)$ do not intersect with $\overline{D}$ at initial time.
	
	The displacement of the total wave field $\boldsymbol{u}$ also satisfies the time domain Navier equation:
	\[
	\partial_t^2\boldsymbol{u}-\mu\Delta\boldsymbol{u}-(\lambda+\mu)\nabla\nabla\cdot\boldsymbol{u}=\boldsymbol{0},\quad {\rm in}~\mathbb{R}^2\setminus\overline{D}\times(0,\infty).
	\]
	Since the obstacle is assumed to be rigid, it holds that
	\[
	\boldsymbol{u}=\boldsymbol{0}, \quad{\rm on}~\partial D\times(0,\infty).
	\]
	In addition, $\boldsymbol{u}$ satisfies the initial condition
	\begin{equation*}
		\left\{
		\begin{aligned}
			&\boldsymbol{u}|_{t=0}=\boldsymbol{u}^{\rm inc}|_{t=0}=\boldsymbol{u}_0&&{\rm in} R^2\setminus\overline{D},\\
			&\partial_t\boldsymbol{u}|_{t=0}=\partial_t\boldsymbol{u}^{\rm inc}|_{t=0}=\boldsymbol{v}_0&&{\rm in} R^2\setminus\overline{D}.
		\end{aligned}
		\right.
	\end{equation*}
	The total field $\boldsymbol{u}$ consists of the incident field $\boldsymbol{u}^{\rm inc}$ and the scattered field $\boldsymbol{v}$, i.e.
	\[
	\boldsymbol{u}=\boldsymbol{u}^{\rm inc}+\boldsymbol{v}.
	\]
	It is easy to verify that the scattered field $\boldsymbol{v}$ satisfies the initial-boundary value problem
	\begin{equation}\label{scattered field}
		\left\{
		\begin{aligned}
			&\partial_{t}^2\boldsymbol{v}-\mu\Delta \boldsymbol{v}-(\lambda+\mu)\nabla\nabla\cdot\boldsymbol{v}=\boldsymbol{0}&& {\rm in}~\mathbb{R}^2\setminus\overline{D}\times(0,\infty),\\
			&\boldsymbol{v}=-\boldsymbol{u}^{\rm inc}&&{\rm on}~\partial D\times(0,\infty),\\
			&\boldsymbol{v}(\cdot,0)=\partial_t \boldsymbol{v}(\cdot,0)=\boldsymbol{0}&&{\rm in}~ \mathbb{R}^2\setminus\overline{D}.
		\end{aligned}
		\right.
	\end{equation}
	
	Given a vector function $\boldsymbol{w}=(w_1,w_2)^\top$ and a scalar function $w$, we define the scalar and vector curl operators:
	\[
	{\rm curl}\boldsymbol{w}=\partial_{x_1}w_2-\partial_{x_2}w_1,\quad {\rm \mathbf{curl}}w=(\partial_{x_2}w,-\partial_{x_1}w)^\top.
	\]
	For any solution $\boldsymbol{v}$ of the elastic wave equation \eqref{scattered field}, the Helmholtz decomposition \cite{Bao} reads
	\begin{equation}\label{Helmholtz decomposition}
		\boldsymbol{v}=\nabla \phi+{\rm \mathbf{curl}}\psi,
	\end{equation}
	where $\phi$, $\psi$ are scalar potential functions. Combining \eqref{Helmholtz decomposition} and \eqref{scattered field}, we may obtain
	\[
	\nabla(\partial_{t}^2\phi-(\lambda+2\mu)\Delta\phi)+{\rm \mathbf{curl}}(\partial_{t}^2\psi-\mu\Delta\psi)=\boldsymbol{0},
	\]
	that means $\phi$ and $\psi$ satisfy the wave equations
	\[
	\Delta\phi-\frac{1}{c_1^2}\partial_{t}^2\phi=0,\quad \Delta\psi-\frac{1}{c_2^2}\partial_t^2\psi=0
	\]
	and the initial conditions
	\[
	\phi|_{t=0}=\partial_t\phi|_{t=0}=0,\quad \psi|_{t=0}=\partial_t\psi|_{t=0}=0,
	\]
	where $c_1=(\lambda+2\mu)^{1/2}$, $c_2=\mu^{1/2}$. By using the Helmholtz decomposition and boundary condition on $\partial D$, we have
	\[
	\boldsymbol{v}=\nabla \phi+{\rm \mathbf{curl}}\psi=-\boldsymbol{u}^{\rm inc}.
	\]
	Taking the dot product of the above equation with $\nu$ and $\tau$, respectively, we get
	\[
	\partial_{\nu}\phi+\partial_{\tau}\psi=f_1,\quad \partial_{\tau}\phi-\partial_{\nu}\psi=f_2,
	\]
	where 
	\[
	f_1=-\nu\cdot \boldsymbol{u}^{\rm inc},\quad f_2=-\tau\cdot \boldsymbol{u}^{\rm inc}.
	\]
	In summary, the scalar potential functions $\phi$, $\psi$ satisfy the coupled initial-boundary value problem
	\begin{equation*}
		\left\{
		\begin{aligned}
			&\Delta \phi-\frac{1}{c_1^2}\partial_{t}^2\phi=0,\quad \Delta \psi-\frac{1}{c_2^2}\partial_{t}^2\psi=0 &&{\rm in}~\mathbb{R}^2\setminus\overline{D}\times(0,\infty),\\
			&\partial_{\nu}\phi+\partial_{\tau}\psi=f_1,\quad \partial_{\tau}\phi-\partial_{\nu}\psi=f_2 &&{\rm on}~\partial D\times(0,\infty),\\
			&\phi|_{t=0}=\partial_t\phi|_{t=0}=0,\quad\psi|_{t=0}=\partial_t\psi|_{t=0}=0 &&{\rm in}~\mathbb{R}^2\setminus\overline{D}.
		\end{aligned}
		\right.
	\end{equation*}
	
Since the wave has finite speed of propagation, for any given time $T>0$, we can pick a sufficiently large $\tilde{R}>0$ such that the scattered field do not reach the curve $\partial B_{\tilde{R}}=\left\lbrace \boldsymbol{x}\in\mathbb{R}^2: |\boldsymbol{x}|=\tilde{R}\right\rbrace $ at time $T$, that means 
\[
\phi=\psi=0,\quad {\rm in}~ \mathbb{R}^2\setminus B_{\tilde{R}}\times(0,T].
\]
Thus we have
	\begin{equation}\label{initial-boundary value problem}
		\left\{
		\begin{aligned}
			&\Delta \phi-\frac{1}{c_1^2}\partial_{t}^2\phi=0,\quad \Delta \psi-\frac{1}{c_2^2}\partial_{t}^2\psi=0 &&{\rm in}~\Omega\times(0,T],\\
			&\partial_{\nu}\phi+\partial_{\tau}\psi=f_1,\quad \partial_{\tau}\phi-\partial_{\nu}\psi=f_2 &&{\rm on}~\partial D\times(0,T),\\
			&\phi=0,\quad\psi=0&&{\rm on}~\partial B_{\tilde{R}}\times(0,T),\\
			&\phi|_{t=0}=\partial_t\phi|_{t=0}=0,\quad\psi|_{t=0}=\partial_t\psi|_{t=0}=0 &&{\rm in}~\Omega,
		\end{aligned}
		\right.
	\end{equation}
	where $\Omega=B_{\tilde{R}}\setminus \overline{D}$.
	\begin{theorem}\label{problemuniqueness}
	The coupled initial-boundary value problem \eqref{initial-boundary value problem} has at most one solution for $\phi\in L^2(0,T;H^2(\Omega))$, $\psi\in L^2(0,T;H^2(\Omega))$.
\end{theorem}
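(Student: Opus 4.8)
The plan is to reduce to the homogeneous problem and then combine a first–order energy identity on $\Omega$ with a logarithmic convexity argument. By linearity it suffices to show that a solution of \eqref{initial-boundary value problem} with $f_1=f_2=0$ vanishes identically, so assume $\phi,\psi\in L^2(0,T;H^2(\Omega))$ solve the homogeneous problem and introduce the total energy
\[
E(t)=\frac12\int_\Omega\Big(\tfrac{1}{c_1^2}|\partial_t\phi|^2+|\nabla\phi|^2+\tfrac{1}{c_2^2}|\partial_t\psi|^2+|\nabla\psi|^2\Big)\,dx .
\]
The first step is a standard energy identity: differentiating $E$ in $t$, substituting the wave equations $\tfrac{1}{c_i^2}\partial_t^2=\Delta$, and integrating by parts reduces $E'(t)$ to a boundary integral over $\partial\Omega=\partial B_{\tilde R}\cup\partial D$. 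On $\partial B_{\tilde R}$ the Dirichlet condition $\phi=\psi=0$ forces $\partial_t\phi=\partial_t\psi=0$, so that part drops out; on $\partial D$ one is left with $-\int_{\partial D}(\partial_t\phi\,\partial_\nu\phi+\partial_t\psi\,\partial_\nu\psi)\,ds$. Inserting the coupled boundary conditions in the form $\partial_\nu\phi=-\partial_\tau\psi$, $\partial_\nu\psi=\partial_\tau\phi$ and integrating by parts once along the closed curve $\partial D$, the cross terms assemble into an exact time derivative, giving $E'(t)=\tfrac{d}{dt}\int_{\partial D}\phi\,\partial_\tau\psi\,ds$. Since the initial data vanish, this yields
\[
E(t)=\int_{\partial D}\phi(\cdot,t)\,\partial_\tau\psi(\cdot,t)\,ds,\qquad t\in[0,T].
\]

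The second step upgrades this to a statement about $M(t):=\int_\Omega\big(\tfrac{1}{c_1^2}|\phi|^2+\tfrac{1}{c_2^2}|\psi|^2\big)\,dx$. Using Green's formula together with the wave equations (the $\partial B_{\tilde R}$ contributions again vanish because $\phi=\psi=0$ there), the boundary conditions on $\partial D$, and the identity for $E(t)$ just obtained, one derives the key relation
\[
\int_\Omega\Big(\tfrac{1}{c_1^2}\phi\,\partial_t^2\phi+\tfrac{1}{c_2^2}\psi\,\partial_t^2\psi\Big)\,dx=\int_\Omega\Big(\tfrac{1}{c_1^2}|\partial_t\phi|^2+\tfrac{1}{c_2^2}|\partial_t\psi|^2\Big)\,dx .
\]
Differentiating $M$ twice and inserting this relation gives $M''(t)=4\int_\Omega\big(\tfrac{1}{c_1^2}|\partial_t\phi|^2+\tfrac{1}{c_2^2}|\partial_t\psi|^2\big)\,dx$, whereas the Cauchy--Schwarz inequality applied to $M'(t)=2\int_\Omega\big(\tfrac{1}{c_1^2}\phi\,\partial_t\phi+\tfrac{1}{c_2^2}\psi\,\partial_t\psi\big)\,dx$ gives $(M'(t))^2\le M(t)\,M''(t)$. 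Hence $M$ is logarithmically convex on $[0,T]$. Because the homogeneous initial conditions give $M(0)=M'(0)=0$ and $M\ge0$, log-convexity forces $M\equiv0$: if $M$ were positive at some point, then on the last subinterval abutting a zero of $M$ the function $\log M$ would be convex yet tend to $-\infty$ at its left endpoint, which is impossible. Therefore $\phi\equiv\psi\equiv0$ in $\Omega\times[0,T]$.

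Throughout, the regularity hypothesis $\phi,\psi\in L^2(0,T;H^2(\Omega))$ is what makes the boundary traces $\partial_\nu\phi,\partial_\tau\psi\in L^2(\partial D)$ meaningful for a.e.\ $t$ and, via the wave equations, gives $\partial_t^2\phi,\partial_t^2\psi\in L^2(0,T;L^2(\Omega))$, so that $M\in C^1([0,T])$ with $M'$ absolutely continuous --- exactly the regularity the log-convexity argument needs. I expect the main obstacle to be the handling of the $\partial D$ boundary terms: the coupled Cauchy--Riemann--type conditions there do not render the energy flux of $\phi$ or of $\psi$ separately sign-definite, so the first-order energy identity by itself does not close the argument, and one is forced to pass to the second-order quantity $M(t)$ and exploit logarithmic convexity. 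Some care is also needed with the orientation of $\nu$ on $\partial D$ and with justifying the differentiation under the integral sign and the integration-by-parts steps at the stated regularity.
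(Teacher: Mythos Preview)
Your argument is correct and takes a genuinely different route from the paper. The paper tests the identity $\nabla(\partial_t^2\phi-c_1^2\Delta\phi)+\mathbf{curl}(\partial_t^2\psi-c_2^2\Delta\psi)=0$ against $\partial_t(\nabla\phi+\mathbf{curl}\psi)$ and, after Green's theorem and the boundary conditions, obtains directly a sum of nonnegative ``elastic'' energies
\[
\tfrac12\|\partial_t(\nabla\phi+\mathbf{curl}\psi)\|_{L^2}^2+\tfrac{\lambda+\mu}{2}\|\Delta\phi\|_{L^2}^2+\tfrac{\mu}{2}\|\nabla(\nabla\phi+\mathbf{curl}\psi)\|_{L^2}^2=0,
\]
from which $\partial_t^2\phi=\partial_t^2\psi=0$ and hence $\phi=\psi=0$ follow by an elementary integral inequality. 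In other words, the paper works at the level of the elastic field $\boldsymbol{v}=\nabla\phi+\mathbf{curl}\psi$, where the coupled boundary conditions are exactly the rigid condition $\boldsymbol v=0$, and this makes the energy coercive in one shot; the price is that third spatial derivatives ($\nabla\Delta\phi$, $\nabla\mathbf{curl}\Delta\psi$) appear in the computation and must be justified by a density argument. Your approach stays with the standard first-order wave energies of $\phi$ and $\psi$ separately, accepts that the $\partial D$ flux is not sign-definite, and instead exploits the exact identity $E(t)=\int_{\partial D}\phi\,\partial_\tau\psi\,ds$ to close a log-convexity argument for $M(t)$; this is more elementary and avoids third derivatives, but hinges on spotting that the boundary flux is an exact time derivative. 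Both proofs need roughly the same time-regularity bootstrap (you need $\partial_t\phi\in H^1(\Omega)$ for the energy identity, the paper needs smoothness for its higher-order manipulations), and both ultimately appeal to an approximation/density step to reach the stated class $L^2(0,T;H^2(\Omega))$.
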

\begin{proof}
	It suffices to show that $\phi=\psi=0$ in $\Omega\times (0,T]$ if $f_1=f_2=0$ on $\partial D\times(0,T]$. For $\phi\in C^{\infty}(0,T;H^2(\Omega))$ and $\psi\in C^{\infty}(0,T;H^2(\Omega))$, according to the wave equations in \eqref{initial-boundary value problem} and  $c_1=(\lambda+2\mu)^{1/2}$, $c_2=\mu^{1/2}$, we obtain
		\begin{equation*}
			\begin{aligned}
				0=&\int_{0}^{t}\int_{\Omega}\left[ \nabla\left( \partial_{\tilde{t}}^2\phi-\left( \lambda+2\mu\right) \Delta\phi\right)\right]  \cdot\left[ \partial_{\tilde{t}}\left( \nabla\phi+{\rm \mathbf{curl}}\psi\right)\right]  \mathrm{d}\boldsymbol{x}\mathrm{d}\tilde{t}\\
				&+\int_{0}^{t}\int_{\Omega}\left[ {\rm \mathbf{curl}}\left( \partial_{\tilde{t}}^2\psi-\mu \Delta\psi\right)\right]  \cdot\left[ \partial_{\tilde{t}}\left( \nabla\phi+{\rm \mathbf{curl}}\psi\right)\right]  \mathrm{d}\boldsymbol{x}\mathrm{d}\tilde{t}\\
				=&A_1+A_2+A_3.
			\end{aligned}
		\end{equation*}		
where we have set
\begin{equation*}
   \begin{aligned}
A_1:=&\int_{0}^{t}\int_{\Omega}\partial_{\tilde{t}}^2\nabla\phi\cdot\partial_{\tilde{t}}\nabla\phi\mathrm{d}\boldsymbol{x}\mathrm{d}\tilde{t}+\int_{0}^{t}\int_{\Omega}\partial_{\tilde{t}}^2\nabla\phi\cdot\partial_{\tilde{t}}{\rm \mathbf{curl}}\psi\mathrm{d}\boldsymbol{x}\mathrm{d}\tilde{t}\\
+&\int_{0}^{t}\int_{\Omega}\partial_{\tilde{t}}^2{\rm \mathbf{curl}}\psi\cdot\partial_{\tilde{t}}{\rm\mathbf{curl}}\psi\mathrm{d}\boldsymbol{x}\mathrm{d}\tilde{t}+\int_{0}^{t}\int_{\Omega}\partial_{\tilde{t}}^2{\rm \mathbf{curl}}\psi\cdot\partial_{\tilde{t}}\nabla\phi\mathrm{d}\boldsymbol{x}\mathrm{d}\tilde{t},\\
A_2:=&-(\lambda+\mu)\int_{0}^{t}\int_{\Omega}(\nabla\Delta\phi)\cdot\partial_{\tilde{t}}\nabla\phi\mathrm{d}\boldsymbol{x}\mathrm{d}\tilde{t}-(\lambda+\mu)\int_{0}^{t}\int_{\Omega}(\nabla\Delta\phi)\cdot\partial_{\tilde{t}}{\rm\mathbf{curl}}\psi\mathrm{d}\boldsymbol{x}\mathrm{d}\tilde{t},\\	
A_3:=&-\mu\int_{0}^{t}\int_{\Omega}(\nabla\Delta\phi)\cdot\partial_{\tilde{t}}\nabla\phi\mathrm{d}\boldsymbol{x}\mathrm{d}\tilde{t}-\mu\int_{0}^{t}\int_{\Omega}(\nabla\Delta\phi)\cdot\partial_{\tilde{t}}{\rm\mathbf{curl}}\psi\mathrm{d}\boldsymbol{x}\mathrm{d}\tilde{t}\\
&-\mu\int_{0}^{t}\int_{\Omega}({\rm \mathbf{curl}}\Delta\psi)\cdot\partial_{\tilde{t}}\nabla\phi\mathrm{d}\boldsymbol{x}\mathrm{d}\tilde{t}-\mu\int_{0}^{t}\int_{\Omega}({\rm \mathbf{curl}}\Delta\psi)\cdot\partial_{\tilde{t}}{\rm\mathbf{curl}}\psi\mathrm{d}\boldsymbol{x}\mathrm{d}\tilde{t}.
\end{aligned}
\end{equation*}
Using the Green's theorem and the initial condition in \eqref{initial-boundary value problem}, we get
		\begin{equation*}
			\begin{aligned}
				A_1=&\int_{\Omega}\int_{0}^{t}\frac{1}{2}\frac{\mathrm{d}}{\mathrm{d}\tilde{t}}\left( \partial_{\tilde{t}}\nabla\phi\cdot\partial_{\tilde{t}}\nabla\phi\right) +\frac{\mathrm{d}}{\mathrm{d}\tilde{t}}\left(\partial_{\tilde{t}}\nabla\phi\cdot\partial_{\tilde{t}}{\rm\mathbf{curl}}\psi\right) +\frac{1}{2}\frac{\mathrm{d}}{\mathrm{d}\tilde{t}}\left(\partial_{\tilde{t}}{\rm\mathbf{curl}}\psi\cdot\partial_{\tilde{t}}{\rm \mathbf{curl}}\psi\right) \mathrm{d}\tilde{t}\mathrm{d}\boldsymbol{x}\\
				=&\int_{\Omega}\frac{1}{2}\int_{0}^{t}\frac{\mathrm{d}}{\mathrm{d}\tilde{t}}\left[ \left( \partial_{\tilde{t}}\nabla\phi+\partial_{\tilde{t}}{\rm\mathbf{curl}}\psi\right) \cdot\left( \partial_{\tilde{t}}\nabla\phi+\partial_{\tilde{t}}{\rm\mathbf{curl}}\psi\right)\right] \mathrm{d}\tilde{t}\mathrm{d}\boldsymbol{x}\\
				=&\frac{1}{2}\int_{\Omega} \left( \partial_{t}\nabla\phi+\partial_{t}{\rm\mathbf{curl}}\psi\right)\cdot\left( \partial_{t}\nabla\phi+\partial_{t}{\rm\mathbf{curl}}\psi\right)\mathrm{d}\boldsymbol{x}\\
				=&\frac{1}{2}\left\| \partial_{t}\nabla\phi+\partial_{t}{\rm\mathbf{curl}}\psi\right\| _{L^2(\Omega)^2}^2,
			\end{aligned}
		\end{equation*}
		\begin{equation*}
			\begin{aligned}
				A_2=&(\lambda+\mu)\int_{0}^{t}\int_{\Omega}\Delta \phi\partial_{\tilde{t}}\Delta\phi\mathrm{d}\boldsymbol{x}\mathrm{d}\tilde{t}\\
				&-(\lambda+\mu)\int_{0}^{t}\int_{\partial B_{\tilde{R}}}\left[ \Delta\phi\nu_{\partial B_{\tilde{R}}}\right] \cdot\partial_{\tilde{t}}\nabla\phi\mathrm{d}s\mathrm{d}\tilde{t}+(\lambda+\mu)\int_{0}^{t}\int_{\partial D}\left[ \Delta\phi\nu_{\partial D}\right] \cdot\partial_{\tilde{t}}\nabla\phi\mathrm{d}s\mathrm{d}\tilde{t}\\
				&+(\lambda+\mu)\int_{0}^{t}\int_{\Omega}\Delta \phi(\partial_{\tilde{t}}\nabla\cdot{\rm\mathbf{curl}}\psi)\mathrm{d}\boldsymbol{x}\mathrm{d}\tilde{t}\\
				&-(\lambda+\mu)\int_{0}^{t}\int_{\partial B_{\tilde{R}}}\left[ \Delta\phi\nu_{\partial B_{\tilde{R}}}\right] \cdot\partial_{\tilde{t}}{\rm \mathbf{curl}}\psi\mathrm{d}s\mathrm{d}\tilde{t}+(\lambda+\mu)\int_{0}^{t}\int_{\partial D}\left[ \Delta\phi\nu_{\partial D}\right] \cdot\partial_{\tilde{t}}{\rm \mathbf{curl}}\psi\mathrm{d}s\mathrm{d}\tilde{t}\\
				=&(\lambda+\mu)\int_{\Omega}\int_{0}^{t}\frac{1}{2}\frac{\mathrm{d}}{\mathrm{d}\tilde{t}}\left[ \left( \Delta\phi\right) ^2\right]\mathrm{d}\tilde{t} \mathrm{d}\boldsymbol{x}-(\lambda+\mu)\int_{0}^{t}\int_{\partial B_{\tilde{R}}}\left[ \Delta\phi\nu_{\partial B_{\tilde{R}}}\right] \cdot\left[ \partial_{\tilde{t}}\nabla\phi+\partial_{\tilde{t}}{\rm \mathbf{curl}}\psi\right] \mathrm{d}s\mathrm{d}\tilde{t}\\
				&+(\lambda+\mu)\int_{0}^{t}\int_{\partial D}\left[ \Delta\phi\nu_{\partial D}\right] \cdot\left[ \partial_{\tilde{t}}\nabla\phi+\partial_{\tilde{t}}{\rm \mathbf{curl}}\psi\right] \mathrm{d}s\mathrm{d}\tilde{t}\\
				=&\frac{\lambda+\mu}{2}\left\| \Delta\phi\right\| _{L^2(\Omega)}^2-(\lambda+\mu)\int_{0}^{t}\int_{\partial B_{\tilde{R}}}\left[ \Delta\phi\nu_{\partial B_{\tilde{R}}}\right] \cdot\left[ \partial_{\tilde{t}}\nabla\phi+\partial_{\tilde{t}}{\rm \mathbf{curl}}\psi\right] \mathrm{d}s\mathrm{d}\tilde{t}\\
				&+(\lambda+\mu)\int_{0}^{t}\int_{\partial D}\left[ \Delta\phi\nu_{\partial D}\right] \cdot\left[ \partial_{\tilde{t}}\nabla\phi+\partial_{\tilde{t}}{\rm \mathbf{curl}}\psi\right] \mathrm{d}s\mathrm{d}\tilde{t},
			\end{aligned}
		\end{equation*}
		\begin{equation*}
			\begin{aligned}
				A_3=&\mu\int_{0}^{t}\int_{\Omega}\left( \nabla\nabla\phi\right) :\left( \partial_{\tilde{t}}\nabla\nabla\phi\right)\mathrm{d}\boldsymbol{x}\mathrm{d}\tilde{t}\\
				&-\mu \int_{0}^{t}\int_{\partial B_{\tilde{R}}}\left[ \left( \nabla\nabla\phi\right) \nu_{\partial B_{\tilde{R}}}\right] \cdot\partial_{\tilde{t}}\nabla\phi\mathrm{d}s\mathrm{d}\tilde{t}+\mu \int_{0}^{t}\int_{\partial D}\left[ \left( \nabla\nabla\phi\right) \nu_{\partial D}\right] \cdot\partial_{\tilde{t}}\nabla\phi\mathrm{d}s\mathrm{d}\tilde{t}\\
				&+\mu\int_{0}^{t}\int_{\Omega}\left( \nabla\nabla\phi\right) :\left( \partial_{\tilde{t}}\nabla{\rm \mathbf{curl}}\psi\right)\mathrm{d}\boldsymbol{x}\mathrm{d}\tilde{t}\\
				&-\mu \int_{0}^{t}\int_{\partial B_{\tilde{R}}}\left[ \left( \nabla\nabla\phi\right) \nu_{\partial B_{\tilde{R}}}\right] \cdot\partial_{\tilde{t}}{\rm \mathbf{curl}}\psi\mathrm{d}s\mathrm{d}\tilde{t}+\mu \int_{0}^{t}\int_{\partial D}\left[ \left( \nabla\nabla\phi\right) \nu_{\partial D}\right] \cdot\partial_{\tilde{t}}{\rm \mathbf{curl}}\psi\mathrm{d}s\mathrm{d}\tilde{t}\\
				&+\mu\int_{0}^{t}\int_{\Omega}\left( \nabla{\rm \mathbf{curl}}\psi\right) :\left( \partial_{\tilde{t}}\nabla\nabla\phi\right)\mathrm{d}\boldsymbol{x}\mathrm{d}\tilde{t}\\
				&-\mu \int_{0}^{t}\int_{\partial B_{\tilde{R}}}\left[ \left( \nabla{\rm \mathbf{curl}}\psi\right) \nu_{\partial B_{\tilde{R}}}\right] \cdot\partial_{\tilde{t}}\nabla\phi\mathrm{d}s\mathrm{d}\tilde{t}+\mu \int_{0}^{t}\int_{\partial D}\left[ \left( \nabla{\rm \mathbf{curl}}\psi\right) \nu_{\partial D}\right] \cdot\partial_{\tilde{t}}\nabla\phi\mathrm{d}s\mathrm{d}\tilde{t}\\
				&+\mu\int_{0}^{t}\int_{\Omega}\left( \nabla{\rm \mathbf{curl}}\psi\right) :\left( \partial_{\tilde{t}}\nabla{\rm \mathbf{curl}}\psi\right)\mathrm{d}\boldsymbol{x}\mathrm{d}\tilde{t}\\
				&-\mu \int_{0}^{t}\int_{\partial B_{\tilde{R}}}\left[ \left( \nabla{\rm \mathbf{curl}}\psi\right) \nu_{\partial B_{\tilde{R}}}\right] \cdot\partial_{\tilde{t}}{\rm \mathbf{curl}}\psi\mathrm{d}s\mathrm{d}\tilde{t}+\mu \int_{0}^{t}\int_{\partial D}\left[ \left( \nabla{\rm \mathbf{curl}}\psi\right) \nu_{\partial D}\right] \cdot\partial_{\tilde{t}}{\rm \mathbf{curl}}\psi\mathrm{d}s\mathrm{d}\tilde{t}\\
				=&\mu\int_{\Omega}\int_{0}^{t}\left[ \frac{1}{2}\frac{\mathrm{d}}{\mathrm{d}\tilde{t}}\left( \nabla\nabla\phi:\nabla\nabla\phi\right) +\frac{\mathrm{d}}{\mathrm{d}\tilde{t}}\left( \nabla\nabla\phi:\nabla{\rm \mathbf{curl}}\psi\right) +\frac{1}{2}\frac{\mathrm{d}}{\mathrm{d}\tilde{t}}\left( \nabla{\rm \mathbf{curl}}\psi:\nabla{\rm \mathbf{curl}}\psi\right) \right] \mathrm{d}\tilde{t}\mathrm{d}\boldsymbol{x}\\
				&-\mu\int_{0}^{t}\int_{\partial B_{\tilde{R}}}\left[ \left( \nabla\nabla\phi\right) \nu_{\partial B_{\tilde{R}}}+\left( \nabla{\rm \mathbf{curl}}\psi\right) \nu_{\partial B_{\tilde{R}}}\right] \cdot\left[ \partial_{\tilde{t}}\nabla\phi+\partial_{\tilde{t}}{\rm\mathbf{curl}}\psi\right] \mathrm{d}s\mathrm{d}\tilde{t}\\
				&+\mu\int_{0}^{t}\int_{\partial D}\left[ \left( \nabla\nabla\phi\right) \nu_{\partial D}+\left( \nabla{\rm \mathbf{curl}}\psi\right) \nu_{\partial D}\right] \cdot\left[ \partial_{\tilde{t}}\nabla\phi+\partial_{\tilde{t}}{\rm\mathbf{curl}}\psi\right] \mathrm{d}s\mathrm{d}\tilde{t}\\
			\end{aligned}
			\end{equation*}
		\begin{equation*}
			\begin{aligned}
				\qquad=&\mu\int_{\Omega}\frac{1}{2}\int_{0}^{t} \frac{\mathrm{d}}{\mathrm{d}\tilde{t}}\left[ \left( \nabla\nabla\phi+\nabla{\rm \mathbf{curl}}\psi\right):\left( \nabla\nabla\phi+\nabla{\rm \mathbf{curl}}\psi\right)\right] \mathrm{d}\tilde{t}\mathrm{d}\boldsymbol{x}\\
				&-\mu\int_{0}^{t}\int_{\partial B_{\tilde{R}}}\left[ \left( \nabla\nabla\phi\right) \nu_{\partial B_{\tilde{R}}}+\left( \nabla{\rm \mathbf{curl}}\psi\right) \nu_{\partial B_{\tilde{R}}}\right] \cdot\left[ \partial_{\tilde{t}}\nabla\phi+\partial_{\tilde{t}}{\rm\mathbf{curl}}\psi\right] \mathrm{d}s\mathrm{d}\tilde{t}\\
				&+\mu\int_{0}^{t}\int_{\partial D}\left[ \left( \nabla\nabla\phi\right) \nu_{\partial D}+\left( \nabla{\rm \mathbf{curl}}\psi\right) \nu_{\partial D}\right] \cdot\left[ \partial_{\tilde{t}}\nabla\phi+\partial_{\tilde{t}}{\rm\mathbf{curl}}\psi\right] \mathrm{d}s\mathrm{d}\tilde{t}\\
				=&\frac{\mu}{2}\left\|  \nabla\nabla\phi+\nabla{\rm \mathbf{curl}}\psi\right\| _{L^2(\Omega)^{2\times 2}}^2-\mu\int_{0}^{t}\int_{\partial B_{\tilde{R}}}\left[ \left( \nabla\nabla\phi\right) \nu_{\partial B_{\tilde{R}}}+\left( \nabla{\rm \mathbf{curl}}\psi\right) \nu_{\partial B_{\tilde{R}}}\right] \cdot\left[ \partial_{\tilde{t}}\nabla\phi+\partial_{\tilde{t}}{\rm\mathbf{curl}}\psi\right] \mathrm{d}s\mathrm{d}\tilde{t}\\
				&+\mu\int_{0}^{t}\int_{\partial D}\left[ \left( \nabla\nabla\phi\right) \nu_{\partial D}+\left( \nabla{\rm \mathbf{curl}}\psi\right) \nu_{\partial D}\right] \cdot\left[ \partial_{\tilde{t}}\nabla\phi+\partial_{\tilde{t}}{\rm\mathbf{curl}}\psi\right] \mathrm{d}s\mathrm{d}\tilde{t},
			\end{aligned}
		\end{equation*}
		where $A:B={\rm tr}(AB^\top)$ is the Frobenius inner product of square matrices $A$ and $B$. The coupled boundary condition on $\partial D$ and the zero boundary condition on $\partial B_{\tilde{R}}$ in \eqref{initial-boundary value problem} yield that
		\begin{equation*}
			\begin{aligned}
				&A_2=\frac{\lambda+\mu}{2}\left\| \Delta\phi\right\| _{L^2(\Omega)}^2,\\
				&A_3=\frac{\mu}{2}\left\|  \nabla\nabla\phi+\nabla{\rm \mathbf{curl}}\psi\right\| _{L^2(\Omega)^{2\times 2}}^2.	
			\end{aligned}
		\end{equation*} 
		So we have
		\begin{equation*}
			\begin{aligned}
				0=&A_1+A_2+A_3\\
				=&\frac{1}{2}\left\| \partial_{t}\nabla\phi+\partial_{t}{\rm\mathbf{curl}}\psi\right\| _{L^2(\Omega)^2}^2+\frac{\lambda+\mu}{2}\left\| \Delta\phi\right\| _{L^2(\Omega)}^2+\frac{\mu}{2}\left\|  \nabla\nabla\phi+\nabla{\rm \mathbf{curl}}\psi\right\| _{L^2(\Omega)^{2\times 2}}^2,
			\end{aligned}
		\end{equation*}
		From this, we conclude that
		\begin{equation*}
			\nabla\phi+{\rm \mathbf{curl}}\psi=0,\quad \Delta\phi=0,\quad {\rm in}~\Omega,
		\end{equation*}
		which implies that $\partial_{t}^2\phi=0$ and $\partial_{t}^2\psi=0$. It follows from Cauchy-Schwarz inequality and Young's inequality that
		\begin{equation*}
			\begin{aligned}
				\left\| \phi(t)\right\| _{L^2(\Omega)}^2=&\int_{0}^{t}\frac{\mathrm{d}}{\mathrm{d}\tau}\left\| \phi(\tau)\right\| _{L^2(\Omega)}^2\mathrm{d}\tau=2\int_{0}^{t}\int_{\Omega}\phi(\tau)\partial_{\tau}\phi(\tau)\mathrm{d}\boldsymbol{x}\mathrm{d}\tau\\
				&\le\int_{0}^{t}2\left\| \phi(\tau)\right\| _{L^2(\Omega)}\left\| \partial_{\tau}\phi(\tau)\right\| _{L^2(\Omega)}\mathrm{d}\tau\\
				&\le\int_{0}^{t}\left[ \epsilon\left\| \phi(\tau)\right\| _{L^2(\Omega)}^2+\frac{1}{4\epsilon}\left\| \partial_{\tau}\phi(\tau)\right\| _{L^2(\Omega)}^2\right] \mathrm{d}\tau\\
				&\le\epsilon T\max_{t\in(0,T]}\left\| \phi(t)\right\| _{L^2(\Omega)}^2+\frac{T}{4\epsilon}\max_{t\in(0,T]}\left\| \partial_{t}\phi(t)\right\| _{L^2(\Omega)}^2,
			\end{aligned}
		\end{equation*}
		that means 
		\[
		\max_{t\in(0,T]}\left\| \phi(t)\right\| _{L^2(\Omega)}^2
		\le\epsilon T\max_{t\in(0,T]}\left\| \phi(t)\right\| _{L^2(\Omega)}^2+\frac{T}{4\epsilon}\max_{t\in(0,T]}\left\| \partial_{t}\phi(t)\right\| _{L^2(\Omega)}^2.
		\]
			Taking $\epsilon=\frac{1}{2T}$, we have
		\begin{equation*}
			\max_{t\in(0,T]}\left\| \phi(t)\right\| _{L^2(\Omega)}^2\le T^2\max_{t\in(0,T]}\left\| \partial_{t}\phi(t)\right\| _{L^2(\Omega)}^2.
		\end{equation*}
		Analogously, we can obtain
		\begin{equation*}
			\max_{t\in(0,T]}\left\| \partial_{t}\phi(t)\right\| _{L^2(\Omega)}^2\le T^2\max_{t\in(0,T]}\left\| \partial_{t}^2\phi(t)\right\| _{L^2(\Omega)}^2.
		\end{equation*}
		Since $\partial_{t}^2\phi=0$ in $\Omega$ for $t\in[0,T]$, we conclude that $\phi=0$ in $\Omega$. Similarly, we can get $\psi=0$ in $\Omega$. The proof is completed by noting that $C^\infty(0,T;H^2(\Omega))$ is dense in $L^2(0,T;H^2(\Omega))$.
	\end{proof}
	
	The inverse obstacle scattering problem for elastic waves in the time domain can be stated as following:
	\begin{problem}\label{problem1}
		Given a time domain incident plane wave $\boldsymbol{u}^{\rm inc}$ for fixed Lam$\acute{\rm e}$ parameters $\lambda$, $\mu$ and a single incident direction $\boldsymbol{d}$, the inverse obstacle scattering problem is to determine the boundary $\partial D$ from the scattered field data $\boldsymbol{v}(x,t)$, $x\in \partial B_R$, $t\in[0,T]$.
	\end{problem}

	\section{Boundary integral equations}
	\subsection{Retarded potential boundary integral equation method}
	We will show the mathematical expression of the retarded potential boundary integral equation method in this section. For the initial-boundary value problem \eqref{initial-boundary value problem}, the retarded single layer potential is defined as
	\[
	(SL_{\partial D}g)(\boldsymbol{x},t):=\int_{0}^{t}\int_{\partial D} k(t-\tau,|\boldsymbol{x}-\boldsymbol{y}|;c)g(\boldsymbol{y},\tau)\mathrm{d}s_{\boldsymbol{y} }\mathrm{d}\tau,\quad t\in(0,\infty),~\boldsymbol{x}\in\mathbb{R}^2\setminus\partial D,
	\]
	where 
	\[
	k(t,r):=\frac{H(t-c^{-1}r)}{2\pi\sqrt{t^2-c^{-2}r^2}}
	\]
	is the fundamental solution and $H$ is the Heaviside function. Then the corresponding single layer operator is denoted by
	\[
	(S_{\partial D}g)(\boldsymbol{x},t):=\int_{0}^{t}\int_{\partial D}k(t-\tau,|\boldsymbol{x}-\boldsymbol{y}|;c)g(\boldsymbol{y},\tau)\mathrm{d}s_{\boldsymbol{y} }\mathrm{d}\tau,\quad t\in(0,\infty),~\boldsymbol{x}\in \partial D.
	\]
	The observation data are given by the field measured on
	a known curve $\partial B_R$, surrounding the unknown obstacle $D$ over a finite time interval $(0,T]$. We choose the terminal time $T$ such that the energy of the scattered data inside the interested domain is negligible when $t>T$. Furthermore, we assume that the solution of \eqref{initial-boundary value problem} is given as single layer potentials with density $g_1$, $g_2$:
	\begin{equation}\label{single-layer}
		\begin{aligned}
			&\phi=\int_{0}^{t}\int_{\partial D} k(t-\tau,|\boldsymbol{x}-\boldsymbol{y}|;c_1)g_1(\boldsymbol{y},\tau)\mathrm{d}s_{\boldsymbol{y}}\mathrm{d}\tau,\quad t\in(0,T],~\boldsymbol{x}\in \mathbb{R}^2\setminus\overline{D},\\
			&\psi=\int_{0}^{t}\int_{\partial D} k(t-\tau,|\boldsymbol{x}-\boldsymbol{y}|;c_2)g_2(\boldsymbol{y},\tau)\mathrm{d}s_{\boldsymbol{y}}\mathrm{d}\tau,\quad t\in(0,T],~\boldsymbol{x}\in \mathbb{R}^2\setminus\overline{D}.
		\end{aligned}
	\end{equation}
It is easy to verify that $\phi$ and $\psi$ satisfy the wave equation and the initial conditions in \eqref{initial-boundary value problem}.
Letting $\boldsymbol{x}\in \mathbb{R}^2\setminus\overline{D}$ approach the boundary $\partial D$ in \eqref{single-layer}, and using the jump relation of single-layer potentials and the boundary condition of \eqref{initial-boundary value problem}, we deduce for $\boldsymbol{x}\in\partial D$, $t\in(0,T]$ that
	\begin{equation}\label{boundary condition12}
		\begin{aligned}
			-\frac{1}{2}&g_1(\boldsymbol{x},t)+\int_{0}^{t}\int_{\partial D}\frac{\partial k(t-\tau,|\boldsymbol{x}-\boldsymbol{y}|;c_1)}{\partial \nu(\boldsymbol{x})}g_1(\boldsymbol{y},\tau)\mathrm{d}s_{\boldsymbol{y}}\mathrm{d}\tau\\
			&+\int_{0}^{t}\int_{\partial D}\frac{\partial k(t-\tau,|\boldsymbol{x}-\boldsymbol{y}|;c_2)}{\partial \tau(\boldsymbol{x})}g_2(\boldsymbol{y},\tau)\mathrm{d}s_{\boldsymbol{y}}\mathrm{d}\tau=f_1(\boldsymbol{x},t),\\
			&\int_{0}^{t}\int_{\partial D}\frac{\partial k(t-\tau,|\boldsymbol{x}-\boldsymbol{y}|;c_1)}{\partial \tau(\boldsymbol{x})}g_1(\boldsymbol{y},\tau)\mathrm{d}s_{\boldsymbol{y}}\mathrm{d}\tau\\
			+\frac{1}{2}&g_2(\boldsymbol{x},t)-\int_{0}^{t}\int_{\partial D}\frac{\partial k(t-\tau,|\boldsymbol{x}-\boldsymbol{y}|;c_2)}{\partial \nu(\boldsymbol{x})}g_2(\boldsymbol{y},\tau)\mathrm{d}s_{\boldsymbol{y}}\mathrm{d}\tau=f_2(\boldsymbol{x},t),
		\end{aligned}
	\end{equation}
where $f_1(\boldsymbol{x},t)=-\nu(\boldsymbol{x})\cdot\boldsymbol{u}^{\rm inc}(\boldsymbol{x},t)$ and $f_2(\boldsymbol{x},t)=-\tau(\boldsymbol{x})\cdot\boldsymbol{u}^{\rm inc}(\boldsymbol{x},t)$.
Now we discuss the uniqueness result for \eqref{boundary condition12}.
\begin{theorem}
	There exists at most one solution to the boundary integral equations \eqref{boundary condition12}.
\end{theorem}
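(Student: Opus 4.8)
The plan is to reduce the uniqueness of the boundary integral system \eqref{boundary condition12} to the uniqueness of the coupled initial--boundary value problem \eqref{initial-boundary value problem} already established in Theorem~\ref{problemuniqueness}, together with the uniqueness of the associated interior problem. Suppose $(g_1,g_2)$ solves \eqref{boundary condition12} with $f_1=f_2=0$. Define $\phi$ and $\psi$ in all of $\mathbb{R}^2\setminus\partial D$ by the single-layer representations \eqref{single-layer}; these automatically satisfy the two wave equations and the homogeneous initial conditions. For the exterior fields $\phi^{\rm ext},\psi^{\rm ext}$ in $\mathbb{R}^2\setminus\overline{D}$, the jump relations used to derive \eqref{boundary condition12} show that the Cauchy data combination $(\partial_\nu\phi^{\rm ext}+\partial_\tau\psi^{\rm ext},\,\partial_\tau\phi^{\rm ext}-\partial_\nu\psi^{\rm ext})$ vanishes on $\partial D\times(0,T]$. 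Since the single-layer potential has compact support growth with finite speed $c_1,c_2$, for $T$ fixed one picks $\tilde R$ large enough that $\phi^{\rm ext}=\psi^{\rm ext}=0$ on $\partial B_{\tilde R}\times(0,T]$, so $(\phi^{\rm ext},\psi^{\rm ext})$ solves \eqref{initial-boundary value problem} with zero data on $\Omega=B_{\tilde R}\setminus\overline{D}$. By Theorem~\ref{problemuniqueness}, $\phi^{\rm ext}=\psi^{\rm ext}=0$ in $\Omega\times(0,T]$, hence in all of $\mathbb{R}^2\setminus\overline{D}$ by the finite-speed argument and by letting $\tilde R\to\infty$.

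The second half is to transfer this vanishing through $\partial D$ to the interior fields and thereby conclude $g_1=g_2=0$. The single-layer potentials $\phi,\psi$ are continuous across $\partial D$, so the interior traces of $\phi^{\rm int},\psi^{\rm int}$ on $\partial D$ equal the (vanishing) exterior traces. Thus $\phi^{\rm int}$ solves the wave equation $\Delta\phi^{\rm int}-c_1^{-2}\partial_t^2\phi^{\rm int}=0$ in $D\times(0,T]$ with zero initial data and zero Dirichlet data on $\partial D\times(0,T]$; by the standard energy estimate for the interior Dirichlet problem for the wave equation, $\phi^{\rm int}=0$ in $D\times(0,T]$, and likewise $\psi^{\rm int}=0$. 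Finally, the jump of the normal derivative of the single-layer potential across $\partial D$ equals the density: $\partial_\nu\phi^{\rm ext}-\partial_\nu\phi^{\rm int}=-g_1$ (up to the sign convention fixed in the jump relation leading to \eqref{boundary condition12}), and similarly for $g_2$. Since both one-sided normal derivatives vanish, $g_1=g_2=0$ on $\partial D\times(0,T]$, which is the claim.

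The main obstacle I expect is the careful bookkeeping of the jump relations and sign conventions for the retarded single-layer potential in $2$D, where the kernel $k(t,r)=H(t-c^{-1}r)/(2\pi\sqrt{t^2-c^{-2}r^2})$ has a weak singularity; one must verify that the normal-derivative jump relations and the continuity of the layer potential used to pass between \eqref{boundary condition12} and \eqref{initial-boundary value problem} are genuinely valid in the time-domain (respectively $s$-domain) functional setting, i.e. that $\phi,\psi\in L^2(0,T;H^2)$ with traces in the spaces required by Theorem~\ref{problemuniqueness}. A clean way to handle this, if a direct time-domain jump analysis is delicate, is to apply the Laplace transform in $t$: for each $s$ with $\mathrm{Re}\,s>0$ the problem becomes a coupled system of modified Helmholtz equations, the single-layer potential and its jump relations are the classical elliptic ones, exterior uniqueness follows from the $s$-domain analogue of Theorem~\ref{problemuniqueness} (or directly from the positivity of the bilinear form), the interior Dirichlet problem for $-\Delta+s^2 c^{-2}$ is uniquely solvable, and hence the transformed densities vanish for all such $s$; inverting the Laplace transform gives $g_1=g_2=0$. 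The remaining routine point is to record that the finite speed of propagation makes the choice of $\tilde R$ legitimate and independent of the argument.
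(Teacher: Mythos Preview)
Your proposal is correct and follows essentially the same route as the paper: define the single-layer potentials on both sides of $\partial D$, use Theorem~\ref{problemuniqueness} to kill the exterior pair, pass the vanishing Dirichlet trace to the interior by continuity of the single-layer potential, invoke uniqueness for the interior Dirichlet wave problem, and then read off $g_1=g_2=0$ from the normal-derivative jump. The paper's proof is exactly this argument, written more tersely and without your additional (but reasonable) remarks on regularity and the Laplace-transform alternative.
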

\begin{proof}
	It suffices to show that $g_1=g_2=0$ if $f_1=f_2=0$. For $\boldsymbol{x}\in\mathbb{R}^2\setminus\partial D$, $t\in[0,T]$, we define single layer potentials
		\begin{align*}
			&\phi(\boldsymbol{x},t)=\int_{0}^{t}\int_{\partial D}k(|\boldsymbol{x}-\boldsymbol{y}|,t-\tau;c_1)g_1(\boldsymbol{y},\tau)\mathrm{d}s_{\boldsymbol{y}}\mathrm{d}\tau,\\
			&\psi(\boldsymbol{x},t)=\int_{0}^{t}\int_{\partial D}k(|\boldsymbol{x}-\boldsymbol{y}|,t-\tau;c_2)g_2(\boldsymbol{y},\tau)\mathrm{d}s_{\boldsymbol{y}}\mathrm{d}\tau.
		\end{align*}
		Let
		\begin{equation*}
			\phi(\boldsymbol{x},t)=\begin{cases}
				\phi_i,\quad \boldsymbol{x}\in D,\\
				\phi_e,\quad \boldsymbol{x}\in \mathbb{R}^2\setminus\overline{D},
			\end{cases}\quad \psi(\boldsymbol{x},t)=\begin{cases}
				\psi_i,\quad \boldsymbol{x}\in D,\\
				\psi_e,\quad \boldsymbol{x}\in \mathbb{R}^2\setminus\overline{D}.
			\end{cases}
		\end{equation*}
	Then $(\phi_e,\psi_e)$ satisfies \eqref{initial-boundary value problem} with $f_1=0$, $f_2=0$. By the uniqueness result in Theorem \ref{problemuniqueness}, it holds that
\[
\phi_e(\boldsymbol{x},t)=\psi_e(\boldsymbol{x},t)=0,\quad \boldsymbol{x}\in\mathbb{R}^2\setminus\overline{D},~t\in[0,T].
\]
Using the jump condition of single layer potentials, we have on $\partial D$ that
\begin{equation}\label{jump1}
	\phi_e-\phi_i=0,\quad \psi_e-\psi_i=0,
\end{equation}
\begin{equation}\label{jump2}
	\partial_{\nu}\phi_e-\partial_{\nu}\phi_i=-g_1,\quad \partial_{\nu}\psi_e-\partial_{\nu}\psi_i=-g_2.
\end{equation}
Combining \eqref{jump1} and the fact $\phi_e=\psi_e=0$ in $\mathbb{R}^2\setminus\overline{D}$, we derive that $\phi_i$ and $\psi_i$ satisfy the zero boundary condition on $\partial D$. By the uniqueness of the interior problem for the wave equation, it holds that $\phi_i=\psi_i=0$ in $D$. We conclude that $g_1=g_2=0$ by \eqref{jump2}, which completes the proof.
\end{proof}
\subsection{Convolution quadrature}
For time discretization, we adopt the convolution quadrature method \cite{CQ,sayas} to deal with the boundary integral equation \eqref{boundary condition12}. We write 
\[
h_1(t-\tau,|\boldsymbol{x}-\boldsymbol{y}|;c)=\frac{\partial k(t-\tau,|\boldsymbol{x}-\boldsymbol{y}|;c)}{\partial \nu(\boldsymbol{x})},
\]
\[
h_2(t-\tau,|\boldsymbol{x}-\boldsymbol{y}|;c)=\frac{\partial k(t-\tau,|\boldsymbol{x}-\boldsymbol{y}|;c)}{\partial \tau(\boldsymbol{x})}.
\]
For simplicity, the left hands of \eqref{boundary condition12} can be written as
	\begin{equation}\label{convolution equation}
		\begin{aligned}
			-\frac{1}{2}g_1(\boldsymbol{x},t)+w_1(c_1)*g_1+w_2(c_2)*g_2&=-\nu(\boldsymbol{x})\cdot\boldsymbol{u}^{\rm inc}(\boldsymbol{x},t),\\
			w_2(c_1)*g_1+\frac{1}{2}g_2(\boldsymbol{x},t)-w_1(c_2)*g_2&=-\tau(\boldsymbol{x})\cdot\boldsymbol{u}^{\rm inc}(\boldsymbol{x},t),
		\end{aligned}
	\end{equation}
where we have set
\[
(w_j(c)*g)(\boldsymbol{x},t):=\int_{0}^{t}\Big(w_j(t-\tau,c)g(\tau)\Big)(\boldsymbol{x})\mathrm{d}\tau,\quad \boldsymbol{x}\in\partial D,
\]
and $w_j(t-\tau,c)g(\tau)$ is a parameter-dependent integral operator described by
\[
\Big(w_j(t-\tau,c)g(\tau)\Big)(\boldsymbol{x}):=\int_{\partial D}h_j(t-\tau,|\boldsymbol{x}-\boldsymbol{y}|;c)g(\boldsymbol{y},\tau)\mathrm{d}s_{\boldsymbol{y}}, \quad \boldsymbol{x}\in\partial D.
\]
Define the Laplace transform of $w_j$, $j=1,2$ by
\[
W_j(s,c)=\int_{0}^{\infty}w_j(t,c)e^{-st}\mathrm{d}t.
\] 
Then the specific form of $W_j(s)$ is given by
	\begin{equation*}
		W_j(s,c)G(\boldsymbol{x})=\left\{
		\begin{aligned}
			&\int_{\partial D} \frac{\partial K(|\boldsymbol{x}-\boldsymbol{y}|;c,s)}{\partial \nu(\boldsymbol{x})}G(\boldsymbol{y})\mathrm{d}s_{\boldsymbol{y}}, &&j=1,\\
			&\int_{\partial D}\frac{\partial K(|\boldsymbol{x}-\boldsymbol{y}|;c,s)}{\partial \tau(\boldsymbol{x})}G(\boldsymbol{y})\mathrm{d}s_{\boldsymbol{y}}, &&j=2,\\
		\end{aligned}
		\right.
	\end{equation*}
where $K(|\boldsymbol{x}-\boldsymbol{y}|;c,s)=\frac{\mathrm{i}}{4}H_0^{(1)}(\mathrm{i}\frac{s}{c}|\boldsymbol{x}-\boldsymbol{y}|)$, and $H_0^{(1)}$ is the Hankel function of the first kind with zero-order. Since the time discretization is implemented over $[0,T]$, we divided this interval into $N+1$ time steps with
\[
t_n=n\Delta t,\quad n=0,1,\ldots,N,~\Delta t=T/N.
\]
Applying the unconditionally backward difference formula of second order scheme as in \cite{sayas}, the convolution $w_j(c)*g$ in equation \eqref{convolution equation} can be written as 
\begin{equation}\label{convolution}
	(w_j(c)*g)(t_n)=\sum_{l=0}^{n}\omega_{n-l}^{\Delta t}(W_j(c))g^{\Delta t}(t_l),
\end{equation}
where the convolution weights $\omega_n^{\Delta t}(W_j(c))$ are implicitly defined by
\[
W_j(\gamma(\zeta)/\Delta t,c)=\sum_{n=0}^{\infty}\omega_{n-l}^{\Delta t}(W_j(c))\zeta^n,\quad |\zeta|<1,
\]
with $ \gamma(\zeta)=\frac{1}{2}(\zeta^2-4\zeta+3)$. Here, $\omega_{l}^{\Delta t}(W_j(c))$ can be numerically computed by
\[
\omega_{l}^{\Delta t}(W_j(c)):=\frac{1}{2\pi\mathrm{i}}\oint_C \frac{W_j(\gamma(\zeta)/\Delta t,c)}{\zeta^{l+1}}\mathrm{d}\zeta,
\]
where $C$ is chosen as a circle centered at the origin with radius $\tilde{\lambda}<1$. Employing the trapezoidal rule, the approximate convolution weights are then given by
\begin{equation}\label{omega}
	\omega_{l}^{\Delta t,\tilde{\lambda}}(W_j(c)):=\frac{\tilde{\lambda}^{-l}}{N+1}\sum_{k=0}^{N}W_j(s_k,c)\zeta_{N+1}^{lk},
\end{equation}
where $\zeta_{N+1}=e^{\frac{2\pi i}{N+1}}$, $s_k=\frac{\gamma(\tilde{\lambda} \zeta_{N+1}^{-k})}{\Delta t}$.
Substituting the approximate weights \eqref{omega} into \eqref{convolution}, the equation \eqref{convolution equation} can be replaced by the time-discrete problem: find $g_{j,k}(\cdot)=g_j(\cdot,t_k)$, such that for $n=0,\ldots,N$,
	\begin{equation}\label{discrete problem}
		\begin{aligned}
			-\frac{1}{2}g_{1,n}(\boldsymbol{x})+\sum_{l=0}^{n}(\omega_{n-l}^{\Delta t,\tilde{\lambda}}(W_1(c_1))g_{1,l})(\boldsymbol{x})+\sum_{l=0}^{n}(\omega_{n-l}^{\Delta t,\tilde{\lambda}}(W_2(c_2))g_{2,l})(\boldsymbol{x})&=-\nu(\boldsymbol{x})\cdot\boldsymbol{u}_n^{\rm inc}(\boldsymbol{x}),\\
			\sum_{l=0}^{n}(\omega_{n-l}^{\Delta t,\tilde{\lambda}}(W_2(c_1))g_{1,l})(\boldsymbol{x})+\frac{1}{2}g_{2,n}(\boldsymbol{x})-\sum_{l=0}^{n}(\omega_{n-l}^{\Delta t,\tilde{\lambda}}(W_1(c_2))g_{2,l})(\boldsymbol{x})&=-\tau(\boldsymbol{x})\cdot\boldsymbol{u}_n^{\rm inc}(\boldsymbol{x}).
		\end{aligned}
	\end{equation}
where $\boldsymbol{u}_n^{\rm inc}(\cdot)=\boldsymbol{u}^{\rm inc}(\cdot,t_n)$. From Cauchy's theorem, it follows that $\omega_l^{\Delta t}(W_j)=0$ for $j=1,2$, $l<0$ \cite{sayas}. Then \eqref{discrete problem} is equivalent to 
	\begin{equation*}\label{discrete problem N}
		\begin{aligned}
			-\frac{1}{2}g_{1,n}(\boldsymbol{x})+\sum_{l=0}^{N}(\omega_{n-l}^{\Delta t}(W_1(c_1))g_{1,l})(\boldsymbol{x})+\sum_{l=0}^{N}(\omega_{n-l}^{\Delta t}(W_2(c_2))g_{2,l})(\boldsymbol{x})&=-\nu(\boldsymbol{x})\cdot\boldsymbol{u}_n^{\rm inc}(\boldsymbol{x}),\\
			\sum_{l=0}^{N}(\omega_{n-l}^{\Delta t}(W_2(c_1))g_{1,l})(\boldsymbol{x})+\frac{1}{2}g_{2,n}(\boldsymbol{x})-\sum_{l=0}^{N}(\omega_{n-l}^{\Delta t}(W_1(c_2))g_{2,l})(\boldsymbol{x})&=-\tau(\boldsymbol{x})\cdot\boldsymbol{u}_n^{\rm inc}(\boldsymbol{x}),
		\end{aligned}
	\end{equation*}
for $n=0,\ldots,N$.
By using \eqref{omega}, we can finally get the following decoupled boundary integral equations for $l=0,\ldots N$, i.e.
	\begin{equation}\label{field equation}
		\begin{aligned}
			-\frac{1}{2}\hat{g}_{1,l}(\boldsymbol{x})+(W_1(s_l,c_1)\hat{g}_{1,l})(\boldsymbol{x})+(W_2(s_l,c_2)\hat{g}_{2,l})(\boldsymbol{x})&=-\nu(\boldsymbol{x})\cdot\hat{\boldsymbol{u}}_l^{\rm inc}(\boldsymbol{x}),\\
			(W_2(s_l,c_1)\hat{g}_{1,l})(\boldsymbol{x})+\frac{1}{2}\hat{g}_{2,l}(\boldsymbol{x})-(W_1(s_l,c_2)\hat{g}_{2,l})(\boldsymbol{x})&=-\tau(\boldsymbol{x})\cdot\hat{\boldsymbol{u}}_l^{\rm inc}(\boldsymbol{x}).
		\end{aligned}
	\end{equation}
where $\hat{g}_{j,l}$, $j=1,2$ and $\hat{u}^{\rm inc}_l$ are the scaled discrete Fourier transform, i.e.
\[
\hat{g}_{j,l}:=\sum_{k=0}^{N}\tilde{\lambda}^k g_{j,k}\zeta_{N+1}^{-lk},\quad \hat{\boldsymbol{u}}_l^{\rm inc}:=\sum_{n=0}^{N}\tilde{\lambda}^n \boldsymbol{u}_n^{\rm inc}\zeta_{N+1}^{-ln}.
\]
Consequently, we can obtain the density $g_j$ by using the inverse transform:
\[
g_{j,n}=\frac{\tilde{\lambda}^{-n}}{N+1}\sum_{l=0}^{N}\hat{g}_{j,l}\zeta_{N+1}^{nl}.
\]
For the scattered field, by using $\boldsymbol{v}=\nabla \phi+{\rm \mathbf{curl}}\psi$, we have
	\begin{equation*}\label{scattered field integral equation}
		\begin{aligned}
			\boldsymbol{v}(\boldsymbol{x},t)=&\nabla_{\boldsymbol{x}}\int_{0}^{t}\int_{\partial D} k(t-\tau,|\boldsymbol{x}-\boldsymbol{y}|;c_1)g_1(\boldsymbol{y},\tau)\mathrm{d}s_{\boldsymbol{y}}\mathrm{d}\tau\\
			&+{\rm \mathbf{curl}}_{\boldsymbol{x}}\int_{0}^{t}\int_{\partial D} k(t-\tau,|\boldsymbol{x}-\boldsymbol{y}|;c_2)g_2(\boldsymbol{y},\tau)\mathrm{d}s_{\boldsymbol{y}}\mathrm{d}\tau.
		\end{aligned}
	\end{equation*}
Similar to the process of solving \eqref{convolution equation}, the scaled discrete Fourier transform of $\boldsymbol{v}_n(\cdot)=\boldsymbol{v}(\cdot,t_n)$ can be represented as
	\begin{equation}\label{3.12}
		\hat{\boldsymbol{v}}_l(\boldsymbol{x})=\int_{\partial D}\nabla_{\boldsymbol{x}}K(|\boldsymbol{x}-\boldsymbol{y}|;c_1,s_l)\hat{g}_{1,l}(\boldsymbol{y})\mathrm{d}s_{\boldsymbol{y}}+\int_{\partial D}\mathbf{curl}_{\boldsymbol{x}}K(|\boldsymbol{x}-\boldsymbol{y}|;c_2,s_l)\hat{g}_{2,l}(\boldsymbol{y})\mathrm{d}s_{\boldsymbol{y}},
	\end{equation}
for $l=0,\ldots,N$, $\boldsymbol{x}\in \mathbb{R}^2\setminus\overline{D}$, and $\hat{g}_{j,l}$, $j=1,2$ can be obtained from the equation \eqref{field equation}, the scattering wave $\boldsymbol{v}_n$ is given by
\[
\boldsymbol{v}_n=\frac{\tilde{\lambda}^{-n}}{N+1}\sum_{l=0}^{N}\hat{\boldsymbol{v}}_l\zeta_{N+1}^{nl},\quad n=0,\ldots,N.
\]
For the detail of the convolution quadrature method we refer to \cite{CQ,sayas}. 
\section{Nystr$\mathrm{\ddot o}$m-type discretization for boundary integral equations}
In this section, we present a Nystr$\mathrm{\ddot o}$m-type discretization of the equations \eqref{field equation}. For convenience, we denote the normal and tangential derivative boundary integral operators by
	\begin{align*}
		(D_s^cg)(\boldsymbol{x})&=2\int_{\partial D} \frac{\partial K(|\boldsymbol{x}-\boldsymbol{y}|;c,s)}{\partial \nu(\boldsymbol{x})}g(\boldsymbol{y})\mathrm{d}s_{\boldsymbol{y}},\quad \boldsymbol{x}\in\partial D,\\
		(H_s^cg)(\boldsymbol{x})&=2\int_{\partial D} \frac{\partial K(|\boldsymbol{x}-\boldsymbol{y}|;c,s)}{\partial \tau(\boldsymbol{x})}g(\boldsymbol{y})\mathrm{d}s_{\boldsymbol{y}},\quad \boldsymbol{x}\in\partial D.	
	\end{align*}
In addition, we need to introduce the vector boundary integral operators
	\begin{align*}
	(\boldsymbol{N}_s^cg)(\boldsymbol{x})&=\int_{\partial D}\nabla_{\boldsymbol{x}}K(|\boldsymbol{x}-\boldsymbol{y}|;c,s)g(\boldsymbol{y})\mathrm{d}s_{\boldsymbol{y}},\quad \boldsymbol{x}\in\partial B_R,\\
	(\boldsymbol{T}_s^cg)(\boldsymbol{x})&=\int_{\partial D}\mathbf{curl}_{\boldsymbol{x}}K(|\boldsymbol{x}-\boldsymbol{y}|;c,s)g(\boldsymbol{y})\mathrm{d}s_{\boldsymbol{y}},\quad \boldsymbol{x}\in\partial B_R.
	\end{align*}
Then, the boundary integral equations \eqref{field equation} can be rewritten in the operator form
	\begin{equation}\label{4.1}
		-\hat{g}_{1,l}+D_{s_l}^{c_1}\hat{g}_{1,l}+H_{s_l}^{c_2}\hat{g}_{2,l}=2\hat{f}_{1,l},
	\end{equation}
	\begin{equation}\label{4.2}
		H_{s_l}^{c_1}\hat{g}_{1,l}+\hat{g}_{2,l}-D_{s_l}^{c_2}\hat{g}_{2,l}=2\hat{f}_{2,l},
	\end{equation}
	where $\hat{f}_{1,l}=-\nu\cdot\hat{\boldsymbol{u}}_l^{\rm inc}$, $\hat{f}_{2,l}=-\tau\cdot\hat{\boldsymbol{u}}_l^{\rm inc}$, $l=0,\ldots,N$. The corresponding scattered field \eqref{3.12} can be represented as follows
	\begin{equation}\label{4.2.5}
		\hat{\boldsymbol{v}}_l(\boldsymbol{x})=(\boldsymbol{N}_{s_l}^{c_1}\hat{g}_{1,l})(\boldsymbol{x})+(\boldsymbol{T}_{s_l}^{c_2}\hat{g}_{2,l})(\boldsymbol{x}),\quad \boldsymbol{x}\in \partial B_R.
	\end{equation}
	\subsection{Parametrization}
	For simplicity, the boundary $\partial D$ is assumed to be a star-shaped curve with the parametric form
	\[
	\partial D=\left\lbrace p_D(\hat{\boldsymbol{x}})=\boldsymbol{p}+r(\hat{\boldsymbol{x}})\hat{\boldsymbol{x}}:\boldsymbol{p}=(p_1,p_2)^\top,~\hat{\boldsymbol{x}}\in \mathbb{S}\right\rbrace ,
	\]
	where $\mathbb{S}=\left\lbrace \hat{\boldsymbol{x}}(\theta)=(\cos \theta,\sin \theta)^\top:0\le \theta< 2\pi\right\rbrace $. Let $p_D(\theta)$ be the points on $\partial D$, described by $p_D(\theta):=(p_1,p_2)^\top+r(\theta)(\cos \theta,\sin \theta)^\top$ for $0\le \theta<2\pi$. The observation curve is parameterized by $\partial B_R=p_B(\varsigma)=(b_1,b_2)^\top+(R\cos\varsigma,R\sin\varsigma)^\top$ for $0\le\varsigma<2\pi$. We introduce the parametrized integral operators which are still denoted by $D_s^c$, $H_s^c$, $\boldsymbol{N}_s^c$ and $\boldsymbol{T}_s^c$ for convenience, i.e.,
\begin{align*}
	&(D_{s_l}^c(p_D,\varphi_{j,l}))(\theta)=\frac{1}{G_r(\theta)}\int_{0}^{2\pi}\widetilde{D}(\theta,\eta;s_l,c)\varphi_{j,l}(\eta)\mathrm{d}\eta,\\
	&(H_{s_l}^c(p_D,\varphi_{j,l}))(\theta)=\frac{1}{G_r(\theta)}\int_{0}^{2\pi}\widetilde{H}(\theta,\eta;s_l,c)\varphi_{j,l}(\eta)\mathrm{d}\eta,\\
	&(\boldsymbol{N}_{s_l}^c(p_B, p_D,\varphi_{j,l}))(\varsigma)=\int_{0}^{2\pi}\widetilde{\boldsymbol{N}}(\varsigma,\eta;s_l,c)\varphi_{j,l}(\eta)\mathrm{d}\eta,\\
	&(\boldsymbol{T}_{s_l}^c(p_B, p_D,\varphi_{j,l}))(\varsigma)=\int_{0}^{2\pi}\widetilde{\boldsymbol{T}}(\varsigma,\eta;s_l,c)\varphi_{j,l}(\eta)\mathrm{d}\eta,
\end{align*}
where $\varphi_{j,l}(\eta)=G_r(\eta)\hat{g}_{j,l}(p_D(\eta))$, $j=1,2$, $G_r(\eta):=|p_D'(\eta)|=\sqrt{r^2(\eta)+(r'(\eta))^2}$ is the Jacobian of the transform,
    \begin{align*}
	\widetilde{D}(\theta,\eta;s_l,c)&=\frac{s_l}{2c}n(\theta)\cdot[p_D(\theta)-p_D(\eta)]\frac{H_1^{(1)}(i\frac{s_l}{c}|p_D(\theta)-p_D(\eta)|)}{|p_D(\theta)-p_D(\eta)|},\\
	\widetilde{H}(\theta,\eta;s_l,c)&=\frac{s_l}{2c}n(\theta)^\perp\cdot[p_D(\theta)-p_D(\eta)]\frac{H_1^{(1)}(i\frac{s_l}{c}|p_D(\theta)-p_D(\eta)|)}{|p_D(\theta)-p_D(\eta)|},\\
	\widetilde{\boldsymbol{N}}(\varsigma,\eta;s_l,c)&=\frac{s_l}{4c}\frac{H_1^{(1)}(i\frac{s_l}{c}|p_B(\varsigma)-p_D(\eta)|)}{|p_B(\varsigma)-p_D(\eta)|}(p_B(\varsigma)-p_D(\eta)),\\
	\widetilde{\boldsymbol{T}}(\varsigma,\eta;s_l,c)&=\frac{s_l}{4c}\frac{H_1^{(1)}(i\frac{s_l}{c}|p_B(\varsigma)-p_D(\eta)|)}{|p_B(\varsigma)-p_D(\eta)|}\begin{pmatrix}
		p_{B,2}(\varsigma)-p_{D,2}(\eta)\\
		p_{D,1}(\eta)-p_{B,1}(\varsigma)
	\end{pmatrix},
    \end{align*}
	and
	\begin{equation*}
		\begin{aligned}
			&n(\theta)=\nu(p_D(\theta))|p_D'(\theta)|=(p_{D,2}'(\theta),-p_{D,1}'(\theta))^\top,\\
			&n(\theta)^\perp=\tau(p_D(\theta))|p_D'(\theta)|=(p_{D,1}'(\theta),p_{D,2}'(\theta))^\top.
		\end{aligned}
	\end{equation*}
Hence \eqref{4.1}-\eqref{4.2} can be reformulated as the parametrized integral equations
	\begin{equation}\label{4.3}
		-\varphi_{1,l}+(D_{s_l}^{c_1}(p_D,\varphi_{1,l}))G_r+(H_{s_l}^{c_2}(p_D,\varphi_{2,l}))G_r=\omega_{1,l},
	\end{equation}
	\begin{equation}\label{4.4}
		(H_{s_l}^{c_1}(p_D,\varphi_{1,l}))G_r+\varphi_{2,l}-(D_{s_l}^{c_2}(p_D,\varphi_{2,l}))G_r=\omega_{2,l},
	\end{equation}
where $\omega_{j,l}=2(\hat{f}_{j,l}\circ p_D)G_r$, $j=1,2$ and the data equation \eqref{4.2.5} for $\boldsymbol{x}\in\partial B_R$ can be reformulated as
\begin{equation*}
	\hat{\boldsymbol{v}}_l(\varsigma)=(\boldsymbol{N}_{s_l}^{c_1}(p_B,p_D,\varphi_{1,l}))(\varsigma)+(\boldsymbol{T}_{s_l}^{c_2}(p_B,p_D,\varphi_{2,l}))(\varsigma).	
\end{equation*}
\subsection{Discretization}
We adopt the Nystr$\mathrm{\ddot o}$m method \cite{Dong2019,nystrom} to discrete the boundary integrals \eqref{4.3}-\eqref{4.4}. The kernel $\widetilde{D}$ of the parametrized normal derivative integral operator can be written in the form
\[
\widetilde{D}(\theta,\eta; s_l,c)=\widetilde{D}_1(\theta,\eta;s_l,c)\ln\big(4\sin^2\frac{\theta-\eta}{2}\big)+\widetilde{D}_2(\theta,\eta;s_l,c),
\]
where
	\begin{align*}
	\widetilde{D}_1(\theta,\eta;s_l,c)&=\frac{{\rm i}s_l}{2\pi c}n(\theta)\cdot[p_D(\theta)-p_D(\eta)]\frac{J_1({\rm i}\frac{s_l}{c}|p_D(\theta)-p_D(\eta)|)}{|p_D(\theta)-p_D(\eta)|},\\
	\widetilde{D}_2(\theta,\eta;s_l,c)&=\tilde{D}(\theta,\eta; s_l,c)-\tilde{D}_1(\theta,\eta;s_l,c)\ln\big(4\sin^2\frac{\theta-\eta}{2}\big).	
	\end{align*}
It can be shown that the diagonal terms are
\[
\widetilde{D}_1(\theta,\theta;s_l,c)=0,\quad\widetilde{D}_2(\theta,\theta;s_l,c)=\frac{1}{2\pi}\frac{n(\theta)\cdot p_D''(\theta)}{|p_D'(\theta)|^2}.
\]
	
Similar to \cite{Dong2019},	we split the kernel $\tilde{H}$ of the parametrized tangential derivative integral operator into
\begin{equation*}
	\widetilde{H}(\theta,\eta;s_l,c)=\widetilde{H}_1(\theta,\eta;s_l,c)\frac{1}{\sin(\eta-\theta)}+\widetilde{H}_2(\theta,\eta;s_l,c)\ln\big(4\sin^2\frac{\theta-\eta}{2}\big)+\widetilde{H}_3(\theta,\eta;s_l,c),	
\end{equation*}
	where
	\begin{equation*}
		\begin{aligned}
			&\widetilde{H}_1(\theta,\eta;s_l,c)=\frac{1}{\pi}n(\theta)^\perp\cdot[p_D(\eta)-p_D(\theta)]\frac{\sin(\eta-\theta)}{|p_D(\theta)-p_D(\eta)|^2},\\
			&\widetilde{H}_2(\theta,\eta;s_l,c)=\frac{{\rm i}s_l}{2\pi c}n(\theta)^\perp\cdot[p_D(\theta)-p_D(\eta)]\frac{J_1({\rm i}\frac{s_l}{c}|p_D(\theta)-p_D(\eta)|)}{|p_D(\theta)-p_D(\eta)|},\\
			&\widetilde{H}_3(\theta,\eta;s_l,c)=\tilde{H}(\theta,\eta;s_l,c)-\tilde{H}_1(\theta,\eta;s_l,c)\frac{1}{\sin(\eta-\theta)}-\tilde{H}_2(\theta,\eta;s_l,c)\ln\big(4\sin^2\frac{\theta-\eta}{2}\big)
		\end{aligned}
	\end{equation*}
with diagonal entries given by
\[
\widetilde{H}_1(\theta,\theta;s_l,c)=\frac{1}{\pi},\quad \widetilde{H}_2(\theta,\theta;s_l,c)=0,\quad \tilde{H}_3(\theta,\theta;s_l,c)=0.
\]
	
Let $\eta_j^{(\tilde{n})}:=\pi j/\tilde{n}$, $j=0,\ldots,2\tilde{n}-1$, $\varsigma_i^{(\overline{n})}:=\pi i/\overline{n}$, $i=0,\ldots,2\overline{n}-1$ be equidistant sets of quadrature points. Then we obtain the fully discrete linear system:
\begin{equation*}
\begin{aligned}
	\omega_{1,l,i}^{(\tilde{n})}&=-\varphi_{1,l,i}^{(\tilde{n})}+\sum_{j=0}^{2\tilde{n}-1} X_{i,j;l,1} \varphi_{1,l,j}^{(\tilde{n})} +\sum_{j=0}^{2\tilde{n}-1} Y_{i,j;l,2} \varphi_{2,l,j}^{(\tilde{n})},\\
	\omega_{2,l,i}^{(\tilde{n})}&=\sum_{j=0}^{2\tilde{n}-1}Y_{i,j;l,1}\varphi_{1,l,j}^{(\tilde{n})} +\varphi_{2,l,i}^{(\tilde{n})}-\sum_{j=0}^{2\tilde{n}-1}X_{i,j;l,2} \varphi_{2,l,j}^{(\tilde{n})},
\end{aligned}
\end{equation*}
where $\omega_{1,l,i}^{(\tilde{n})}=\omega_{1,l}(\eta_i^{(\tilde{n})})$, $\omega_{2,l,i}^{(\tilde{n})}=\omega_{2,l}(\eta_i^{(\tilde{n})})$, $\varphi_{1,l,j}^{(\tilde{n})}=\varphi_{1,l}(\eta_{j}^{(\tilde{n})})$,
$\varphi_{2,l,j}^{(\tilde{n})}=\varphi_{2,l}(\eta_{j}^{(\tilde{n})})$ for $i,j=0,\ldots,2\tilde{n}-1$, and
\begin{equation*}
	\begin{aligned}
	X_{i,j;l,k}=& R_{|i-j|}^{(\tilde{n})}\tilde{D}_1(\eta_{i}^{(\tilde{n})},\eta_{j}^{(\tilde{n})};s_l,c_k)+\frac{\pi}{\tilde{n}}\tilde{D}_2(\eta_{i}^{(\tilde{n})},\eta_j^{(\tilde{n})};s_l,c_k)\\
	Y_{i,j;l,k}=& -T_{i-j}^{(\tilde{n})}\tilde{H}_1(\eta_{i}^{(\tilde{n})},\eta_j^{(\tilde{n})};s_l,c_k)+R_{|i-j|}^{(\tilde{n})}\tilde{H}_2(\eta_{i}^{(\tilde{n})},\eta_j^{(\tilde{n})};s_l,c_k)+\frac{\pi}{\tilde{n}}\tilde{H}_3(\eta_{i}^{(\tilde{n})},\eta_j^{(\tilde{n})};s_l,c_k)\\
	R_j^{(\tilde{n})}=&R_j^{(\tilde{n})}(0)=-\frac{2\pi}{\tilde{n}}\sum_{m=1}^{\tilde{n}-1}\frac{1}{m}\cos\frac{mj\pi}{\tilde{n}}-\frac{(-1)^j\pi}{\tilde{n}^2},\\
	T_j^{(\tilde{n})}=&T_j^{(\tilde{n})}(0)=\frac{2\pi}{\tilde{n}}\sum_{m=0}^{\tilde{m}}\sin\frac{(2m+1)j\pi}{\tilde{n}},\quad
	\tilde{m}=\begin{cases}
		(\tilde{n}-3)/2,&\tilde{n}=1,3,5,\ldots,\\
		\tilde{n}/2-1,&\tilde{n}=2,4,6,\ldots.
	\end{cases}
		\end{aligned}
\end{equation*}

\section{Reconstruction methods}
In this section, we introduce a system of nonlinear equations and develop corresponding reconstruction method for Problem 1.
\subsection{Nonlinear integral equation}
Based on the convolution quadrature method, we are going to solve a system of $s$-domain nonlinear integral equations for the inverse problem instead of solving the original time domain problem. Specifically, combining \eqref{field equation} and \eqref{3.12}, we obtain a system of field equations and data equation in $s$-domain		

%
	\begin{equation}\label{5.2}
		-\hat{g}_{1,l}+D_{s_l}^{c_1}\hat{g}_{1,l}+H_{s_l}^{c_2}\hat{g}_{2,l}=2\hat{f}_{1,l},
	\end{equation}
	\begin{equation}\label{5.3}
		H_{s_l}^{c_1}\hat{g}_{1,l}+\hat{g}_{2,l}-D_{s_l}^{c_2}\hat{g}_{2,l}=2\hat{f}_{2,l},
	\end{equation}
	\begin{equation}\label{5.1}
	\begin{aligned}
		(\boldsymbol{N}_{s_l}^{c_1}\hat{g}_{1,l})(\boldsymbol{x})+(\boldsymbol{T}_{s_l}^{c_2}\hat{g}_{2,l})(\boldsymbol{x})=\hat{\boldsymbol{v}}_l(\boldsymbol{x}),
	\end{aligned}
\end{equation}
for $l=0,\ldots,N$. The field equations and data equation \eqref{5.2}-\eqref{5.1} can be reformulated as parametrized integrals equations
	\begin{equation}\label{5.4}
		-\varphi_{1,l}+(D_{s_l}^{c_1}(p_D,\varphi_{1,l}))G_r+(H_{s_l}^{c_2}(p_D,\varphi_{2,l}))G_r=\omega_{1,l},
	\end{equation}
	\begin{equation}\label{5.5}
		\varphi_{2,l}+(H_{s_l}^{c_1}(p_D,\varphi_{1,l}))G_r-(D_{s_l}^{c_2}(p_D,\varphi_{2,l}))G_r=\omega_{2,l},
	\end{equation}
	\begin{equation}\label{5.6}
		\boldsymbol{N}_{s_l}^{c_1}(p_B,p_D,\varphi_{1,l})+\boldsymbol{T}_{s_l}^{c_2}(p_B,p_D,\varphi_{2,l})=\hat{\boldsymbol{v}}_l,
	\end{equation}
where $\omega_{j,l}=2(\hat{f}_{j,l}\circ p_D)G_r$, $j=1,2$.

We now seek a sequence of approximations to $\partial D$ by solving the field equations \eqref{5.4}-\eqref{5.5} and the data equation \eqref{5.6} in an alternating manner. Given an approximation for the boundary $\partial D$ one can solve \eqref{5.4}-\eqref{5.5} for $\varphi_{1,l}$ and $\varphi_{2,l}$. Then keeping $\varphi_{1,l}$ and $\varphi_{2,l}$ fixed, the update of the boundary $\partial D$ can be obtained by solving the linearized data equation \eqref{5.6} with respect to  $\partial D$.
\subsection{Iterative scheme}
The linearization of \eqref{5.6} with respect to $p_D$ requires the Fr$\mathrm{\acute{e}}$chet derivative of the parametrized integral operators $\boldsymbol{N}_s^c$ and $\boldsymbol{T}_s^c$, which can be explicitly calculated as following:
	\begin{equation}\label{5.7}
		\begin{aligned}	
			&\left( {\boldsymbol{N}_{s_l}^c}'[p_B,p_D,\varphi_{1,l}]q\right) (\varsigma)\\
			=&\int_{0}^{2\pi}\biggl\{ 
			-\frac{s_l}{4c}\left[\frac{{\rm i}s_l}{c} H_0^{(1)}({\rm i}\frac{s_l}{c}|p_B(\varsigma)-p_D(\eta)|)-\frac{H_1^{(1)}({\rm i}\frac{s_l}{c}|p_B(\varsigma)-p_D(\eta)|)}{|p_B(\varsigma)-p_D(\eta)|}\right] \frac{(p_B(\varsigma)-p_D(\eta))\cdot q(\eta)}{|p_B(\varsigma)-p_D(\eta)|^2}(p_B(\varsigma)-p_D(\eta))\\
			&-\frac{s_l}{4c}H_1^{(1)}({\rm i}\frac{s_l}{c}|p_B(\varsigma)-p_D(\eta)|)\frac{q(\eta)}{|p_B(\varsigma)-p_D(\eta)|}\\
			&+\frac{s_l}{4c}H_1^{(1)}({\rm i}\frac{s_l}{c}|p_B(\varsigma)-p_D(\eta)|)\frac{(p_B(\varsigma)-p_D(\eta))\cdot q(\eta)}{|p_B(\varsigma)-p_D(\eta)|^3}(p_B(\varsigma)-p_D(\eta))\biggr\}\varphi_{1,l}(\eta)\mathrm{d}\eta\\
			=&\int_{0}^{2\pi}\biggl\{\left[ -\frac{{\rm i}s_l^2}{4c^2}\frac{ H_0^{(1)}({\rm i}\frac{s_l}{c}|p_B(\varsigma)-p_D(\eta)|)}{|p_B(\varsigma)-p_D(\eta)|^2}+\frac{s_l}{2c}\frac{H_1^{(1)}({\rm i}\frac{s_l}{c}|p_B(\varsigma)-p_D(\eta)|)}{|p_B(\varsigma)-p_D(\eta)|^3}\right]\begin{pmatrix}
				b_1+R\cos\varsigma-p_1-r(\eta)\cos\eta)\\
				b_2+R\sin\varsigma-p_2-r(\eta)\sin\eta
			\end{pmatrix} \\
			&\qquad\times\bigg[(b_1+R\cos\varsigma-p_1-r(\eta)\cos\eta)(\Delta p_1+\Delta r(\eta)\cos\eta)\\
			&\qquad\quad+(b_2+R\sin\varsigma-p_2-r(\eta)\sin\eta)(\Delta p_2+\Delta r(\eta)\sin\eta)\bigg]\\
			&-\frac{s_l}{4c}\frac{H_1^{(1)}({\rm i}\frac{s_l}{c}|p_B(\varsigma)-p_D(\eta)|)}{|p_B(\varsigma)-p_D(\eta)|}\begin{pmatrix}
				\Delta p_1+\Delta r(\eta)\cos\eta\\
				\Delta p_2+\Delta r(\eta)\sin\eta
			\end{pmatrix}
			\biggr\}\varphi_{1,l}(\eta)\mathrm{d}\eta
		\end{aligned}
	\end{equation}
	and
	\begin{equation}\label{5.8}
		\begin{aligned}
			&\left( {\boldsymbol{T}_{s_l}^c}'[p_B,p_D,\varphi_{2,l}]q\right) (\varsigma)\\
			=&\int_{0}^{2\pi}\biggl\{ 
			-\frac{s_l}{4c}\left[\frac{{\rm i}s_l}{c} H_0^{(1)}({\rm i}\frac{s_l}{c}|p_B(\varsigma)-p_D(\eta)|)-\frac{H_1^{(1)}({\rm i}\frac{s_l}{c}|p_B(\varsigma)-p_D(\eta)|)}{|p_B(\varsigma)-p_D(\eta)|}\right]\\
			&\qquad\times \frac{(p_B(\varsigma)-p_D(\eta))\cdot q(\eta)}{|p_B(\varsigma)-p_D(\eta)|^2}\begin{pmatrix}
				P_{B,2}(\varsigma)-P_{D,2}(\eta)\\
				P_{D,1}(\eta)-P_{B,1}(\varsigma)
			\end{pmatrix}\\
			&+\frac{s_l}{4c}\frac{H_1^{(1)}({\rm i}\frac{s_l}{c}|p_B(\varsigma)-p_D(\eta)|)}{|p_B(\varsigma)-p_D(\eta)|}\begin{pmatrix}
				-q_2(\eta)\\
				q_1(\eta)
			\end{pmatrix}\\
			&+\frac{s_l}{4c}H_1^{(1)}({\rm i}\frac{s_l}{c}|p_B(\varsigma)-p_D(\eta)|)\frac{(p_B(\varsigma)-p_D(\eta))\cdot q(\eta)}{|p_B(\varsigma)-p_D(\eta)|^3}\begin{pmatrix}
				P_{B,2}(\varsigma)-P_{D,2}(\eta)\\
				P_{D,1}(\eta)-P_{B,1}(\varsigma)
			\end{pmatrix}\biggr\}\varphi_{2,l}(\eta)\mathrm{d}\eta\\
			=&\int_{0}^{2\pi}\biggl\{\left[ -\frac{{\rm i}s_l^2}{4c^2}\frac{ H_0^{(1)}({\rm i}\frac{s_l}{c}|p_B(\varsigma)-p_D(\eta)|)}{|p_B(\varsigma)-p_D(\eta)|^2}+\frac{s_l}{2c}\frac{H_1^{(1)}({\rm i}\frac{s_l}{c}|p_B(\varsigma)-p_D(\eta)|)}{|p_B(\varsigma)-p_D(\eta)|^3}\right]\begin{pmatrix}
				b_2+R\sin\varsigma-p_2-r(\eta)\sin\eta\\
				p_1+r(\eta)\cos\eta-b_1-R\cos\varsigma
			\end{pmatrix} \\
			&\qquad\times\bigg[(b_1+R\cos\varsigma-p_1-r(\eta)\cos\eta)(\Delta p_1+\Delta r(\eta)\cos\eta)\\
			&\qquad\quad+(b_2+R\sin\varsigma-p_2-r(\eta)\sin\eta)(\Delta p_2+\Delta r(\eta)\sin\eta)\bigg]\\
			&+\frac{s_l}{4c}\frac{H_1^{(1)}({\rm i}\frac{s_l}{c}|p_B(\varsigma)-p_D(\eta)|)}{|p_B(\varsigma)-p_D(\eta)|}\begin{pmatrix}
				-\Delta p_2-\Delta r(\eta)\sin\eta\\
				\Delta p_1+\Delta r(\eta)\cos\eta
			\end{pmatrix}
			\biggr\}\varphi_{2,l}(\eta)\mathrm{d}\eta
		\end{aligned}
	\end{equation}
where 
\[
q(\eta)=(q_1(\eta),q_2(\eta))^\top=(\Delta p_1,\Delta p_2)^\top+\Delta r(\eta)(\cos\eta,\sin\eta)^\top
\]
denotes the update of the boundary $\partial D$. Then the linearization of \eqref{5.6} leads to
\begin{equation}\label{update equation} {\boldsymbol{N}_{s_l}^{c_1}}'[p_B,p_D,\varphi_{1,l}]q+ {\boldsymbol{T}_{s_l}^{c_2}}'[p_B,p_D,\varphi_{2,l}]q=\boldsymbol{\omega}_l,
\end{equation}
where 
\[
\boldsymbol{\omega}_l=\hat{\boldsymbol{v}}_l-\left( \boldsymbol{N}_{s_l}^{c_1}(p_B,p_D,\varphi_{1,l})+\boldsymbol{T}_{s_l}^{c_2}(p_B,p_D,\varphi_{2,l})\right) .
\]

As usual, a stopping criterion is necessary to terminate the iteration. For our iterative procedure, the following relative error estimator is used:
\begin{equation}\label{error}
	E_{ll}:=\frac{\left\|\hat{\boldsymbol{v}}_l-\left( \boldsymbol{N}_{s_l}^{c_1}(p_B,p_D^{(ll)},\varphi_{1,l})+\boldsymbol{T}_{s_l}^{c_2}(p_B,p_D^{(ll)},\varphi_{2,l})\right) \right\| _{L^2 }}{\left\| \hat{\boldsymbol{v}}_l\right\| _{L^2}}\le \epsilon,
\end{equation}
where $\epsilon$ is a user-specified small positive constant depending on the noise level, and $p_D^{(ll)}$ is the
$ll$th approximation of the boundary $\partial D$.

We are now in a position to present the iterative algorithm for the inverse problem in the following Table.
\newpage
	\begin{table}[h]
		\begin{tabular}{cp{.8\textwidth}}
			\toprule
			\multicolumn{2}{l}{{\bf Algorithm:}\quad Iterative procedure for inverse obstacle scattering problem} \\
			\midrule
			{\bf Step 1} & Emit an incident plane wave $\boldsymbol{u}^{\rm inc}(\boldsymbol{x},t)$ with fixed $\lambda$, $\mu$, a fixed incident direction $\boldsymbol{d}$, and then collect the corresponding noisy scattered-field data $\boldsymbol{v}(\boldsymbol{y},t)$ at observation curve for the scatterer $D$; \\
			{\bf Step 2} & Take the discrete Fourier transform for the incident wave $\boldsymbol{u}^{\rm inc}$ and the scattered-field data $\boldsymbol{v}$; \\
			{\bf Step 3} & Select an initial star-like curve $\Gamma^{(0)}$ for the boundary $\partial D$, the error tolerance $\epsilon$ and a constant $loop$. Set $ll=0$;\\
			{\bf Step 4} & For the curve $\Gamma^{(ll)}$, find the densities $\varphi_{1,l}$ and $\varphi_{2,l}$ from \eqref{5.4}-\eqref{5.5} at $l=[ll/loop]$;\\
			{\bf Step 5} & Evaluate the error $E_{ll}$ defined in \eqref{error};\\
			{\bf Step 6} & If $E_{ll}\geq\epsilon$, then solve \eqref{update equation} to obtain the updated approximation $\Gamma^{(ll+1)}=\Gamma^{(11)}+q$ and set $ll=ll+1$ and go to Step 4. Otherwise, the current approximation $\Gamma^{(ll)}$ is served as the final reconstruction of $\partial D$. \\
			\bottomrule
		\end{tabular}
	\end{table}
	Here, the symbol $[A]$ represents the maximum integer that does not exceed the real number $A$ and the aim of {\bf Step 4} is to iterate for each fixed $l$.
\subsection{Discretization}
We use the Nystr$\mathrm{\ddot o}$m method described in section 4 for the full discretization of \eqref{5.4}-\eqref{5.5}. Now we discuss the discretization of the linearized equation \eqref{update equation} and obtain the update by using least squares with Tikhonov regularization \cite{Tikhonov}. As a finite-dimensional space to approximate the radial function $r$ and its update $\Delta r$, we choose the space of trigonometric polynomials of the form
\begin{equation*}
	\Delta r(\eta)=\sum_{m=0}^{M}\alpha_m\cos m\eta+\sum_{m=1}^{M}\beta_m\sin m\eta,
\end{equation*}
where the integer $M>1$ denotes the truncation number. 

Combing \eqref{5.7}-\eqref{update equation}, we get the fully discrete linear system
\begin{equation}\label{discretized system}
	\boldsymbol{B}_1^{(l)}(\varsigma_i^{(\overline{n})})\Delta p_1+\boldsymbol{B}_2^{(l)}(\varsigma_i^{(\overline{n})})\Delta p_2+\sum_{m=0}^{M}\alpha_m\boldsymbol{B}_{3,m}^{(l),r}(\varsigma_i^{(\overline{n})})+\sum_{m=1}^{M}\beta_m\boldsymbol{B}_{4,m}^{(l),r}(\varsigma_i^{(\overline{n})})=\boldsymbol{\omega}_l(\varsigma_i^{(\overline{n})})
\end{equation}
to determine the real coefficients $\Delta p_1$, $\Delta p_2$, $\alpha_m$ and $\beta_m$, where
\begin{equation*}
	\begin{aligned}
	\boldsymbol{B}_k^{(l)}(\varsigma_i^{(\overline{n})})&=\frac{\pi}{\tilde{n}}\sum_{j=0}^{2\tilde{n}-1}\boldsymbol{L}_k^{(l)}(\varsigma_i^{(\overline{n})},\eta_j^{\tilde{n}},c_1,c_2,\varphi_{1,l},\varphi_{2,l}),\quad k=1,2,\\
	\boldsymbol{B}_{k,m}^{(l),r}(\varsigma_i^{(\overline{n})})&=\frac{\pi}{\tilde{n}}\sum_{j=0}^{2\tilde{n}-1}\boldsymbol{L}_k^{(l),m}(\varsigma_i^{(\overline{n})},\eta_j^{\tilde{n}},c_1,c_2,\varphi_{1,l},\varphi_{2,l}),\quad k=3,4.		
\end{aligned}
\end{equation*}
For the detailed representations of $\boldsymbol{L}_k^{(l)}$ and $\boldsymbol{L}_k^{(l),m}$ we refer to the appendix.

In general, $2M+3\ll4\overline{n}$, and due to the ill-posedness, the overdetermined system \eqref{discretized system} is solved via the Tikhonov regularization. Hence the linear system \eqref{discretized system} is reformulated by minimizing the following function:
\begin{equation}\label{minimization}
	\begin{aligned}
&\sum_{i=0}^{2\overline{n}-1}|	\boldsymbol{B}_1^{(l)}(\varsigma_i^{(\overline{n})})\Delta p_1+\boldsymbol{B}_2^{(l)}(\varsigma_i^{(\overline{n})})\Delta p_2+\sum_{m=0}^{M}\alpha_m\boldsymbol{B}_{3,m}^{(l),r}(\varsigma_i^{(\overline{n})})+\sum_{m=1}^{M}\beta_m\boldsymbol{B}_{4,m}^{(l),r}(\varsigma_i^{(\overline{n})})-\boldsymbol{\omega}_l(\varsigma_i^{(\overline{n})})|^2\\
		&\quad+\lambda_0\bigg(|\Delta p_1|^2+|\Delta p_2|^2+2\pi[\alpha_0^2+\frac{1}{2}\sum_{m=1}^{M}(1+m^2)(\alpha_m^2+\beta_m^2)]\bigg)		
	\end{aligned}
\end{equation}
with a positive regularization parameter $\lambda_0$ and $H^1$ penalty term. It is easy to show that the minimizer of \eqref{minimization} is the solution of the system
\begin{equation}\label{end}
	\big(\lambda_0\tilde{I}+\Re(\tilde{B}_l^*\tilde{B}_l)\big)\xi=\Re(\tilde{B}_l^*\tilde{\omega_l}),
\end{equation}
where
\begin{equation*}
	\tilde{B}_l=\Big(\boldsymbol{B}_1^{(l)},\boldsymbol{B}_2^{(l)},\boldsymbol{B}_{3,0}^{(l),r},\dots,\boldsymbol{B}_{3,M}^{(l),r},\boldsymbol{B}_{4,1}^{(l),r},\dots,\boldsymbol{B}_{4,M}^{(l),r}\Big)_{(4\overline{n})\times(2M+3)}
\end{equation*}
and
\[
\xi=(\Delta p_1,\Delta p_2,\alpha_0,\dots,\alpha_M,\beta_1,\dots,\beta_M)^\top,
\]
\[
\tilde{I}=\mathrm{diag}\left\lbrace 1,1,2\pi,\pi(1+1^2),\dots,\pi(1+M^2),\pi(1+1^2),\dots,\pi(1+M^2)\right\rbrace ,
\]
\[
\tilde{\omega_l}=(\boldsymbol{\omega}_l(\varsigma_0^{(\overline{n})})^\top,
,\dots,\boldsymbol{\omega}_l(\varsigma_{2\overline{n}-1}^{(\overline{n})})^\top)^\top.
\]
Thus, we obtain the new approximation
\[
p_D^{new}(\hat{\boldsymbol{x}})=(\boldsymbol{p}+\Delta \boldsymbol{p})+(r(\hat{\boldsymbol{x}})+\Delta r(\hat{\boldsymbol{x}}))\hat{\boldsymbol{x}}.
\]
\begin{table}[t]
	\caption{Parametrization of the exact boundary curves}
	\begin{tabular}{lll}
		\toprule
		Type           &Parametrization\\
		\midrule
		apple-shaped   & $p_D(\theta)=\displaystyle\frac{1+0.9\cos{\theta}+0.1\sin(2\theta)}{1+0.75\cos{\theta}}(\cos{\theta},\sin{\theta}), \quad \theta\in [0,2\pi]$ \\
		~\\
		peanut-shaped  &
		$p_D(t)=\sqrt{0.25\cos^2{\theta}+\sin^2{\theta}}(\cos{\theta},\sin{\theta}), \quad
		\theta\in[0,2\pi]$\\
		\bottomrule
	\end{tabular}
\end{table}

\section{Numerical experiments}

In this section, we present some numerical examples to demonstrate the feasibility of the proposed iterative reconstruction methods. In all the examples, a single compressional plane wave is used to illuminate the obstacle. The scattered field data are numerically generated at 60 points, i.e., $\overline{n}=30$. In order to avoid the ``inverse crime'', we adopt 100 quadrature nodes for the direct problem and 64 quadrature nodes for the inverse problem.

\begin{figure}[h]
	\centering
	\subfigure[Reconstruction with 0.1$\%$ noise]{\includegraphics[width=0.45\textwidth]{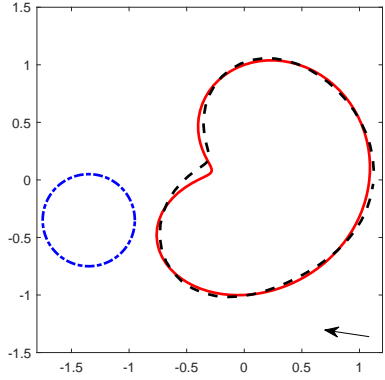}}
	\subfigure[Reconstruction with 1$\%$ noise]{\includegraphics[width=0.45\textwidth]{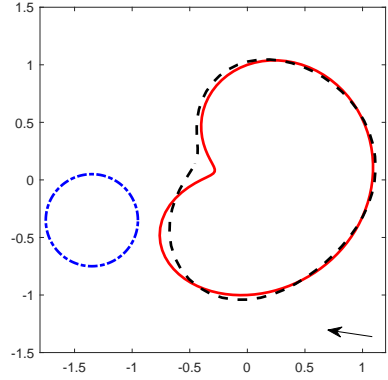}}
	\caption{Reconstructions of an apple-shaped obstacle at different levels of noise, the radius of the initial guess is $r_0=0.4$, the position of the initial guess is $(c_1^{(0)},c_2^{(0)})=(-1.35,-0.35)$ and the incident direction of the incident wave is $(\cos\theta,\sin\theta )$, $\theta=15\pi/16$.}\label{figure1}
\end{figure}
\begin{figure}[h]
	\centering
	\subfigure[Reconstruction with 0.1$\%$ noise]{\includegraphics[width=0.45\textwidth]{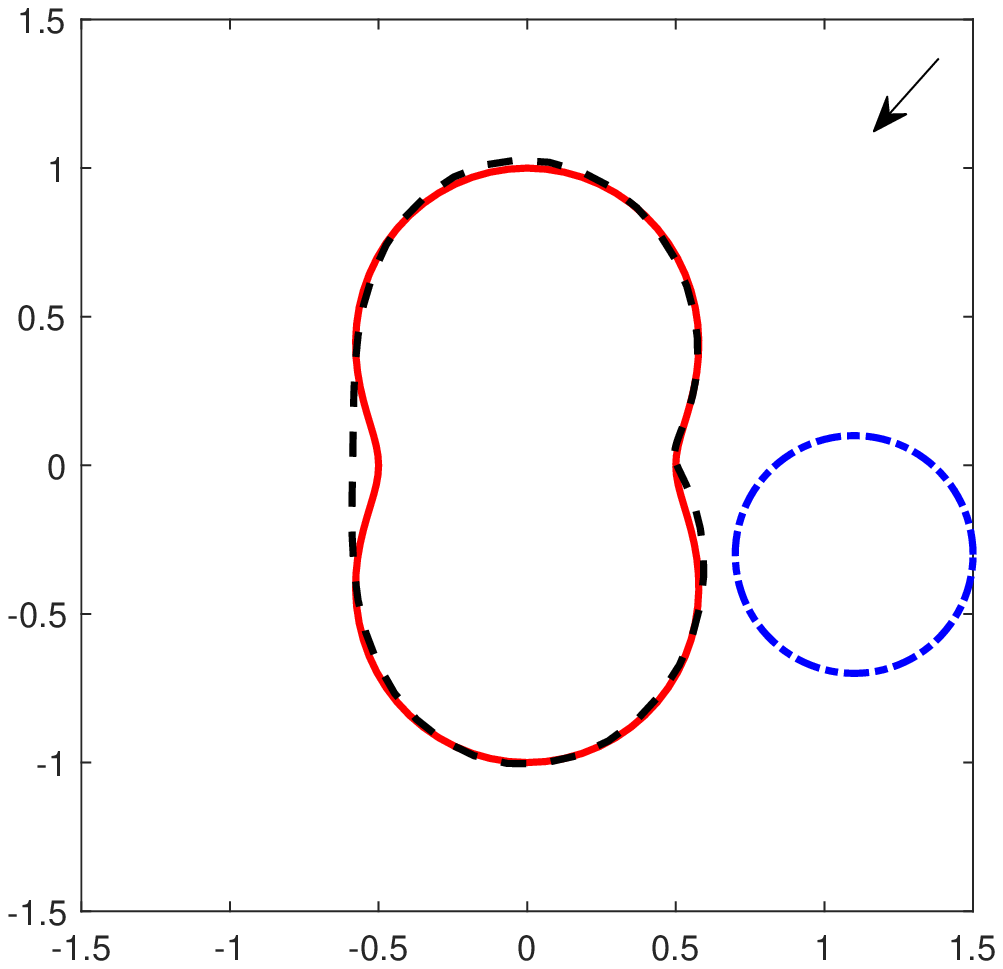}}
	\subfigure[Reconstruction with 1$\%$ noise]{\includegraphics[width=0.45\textwidth]{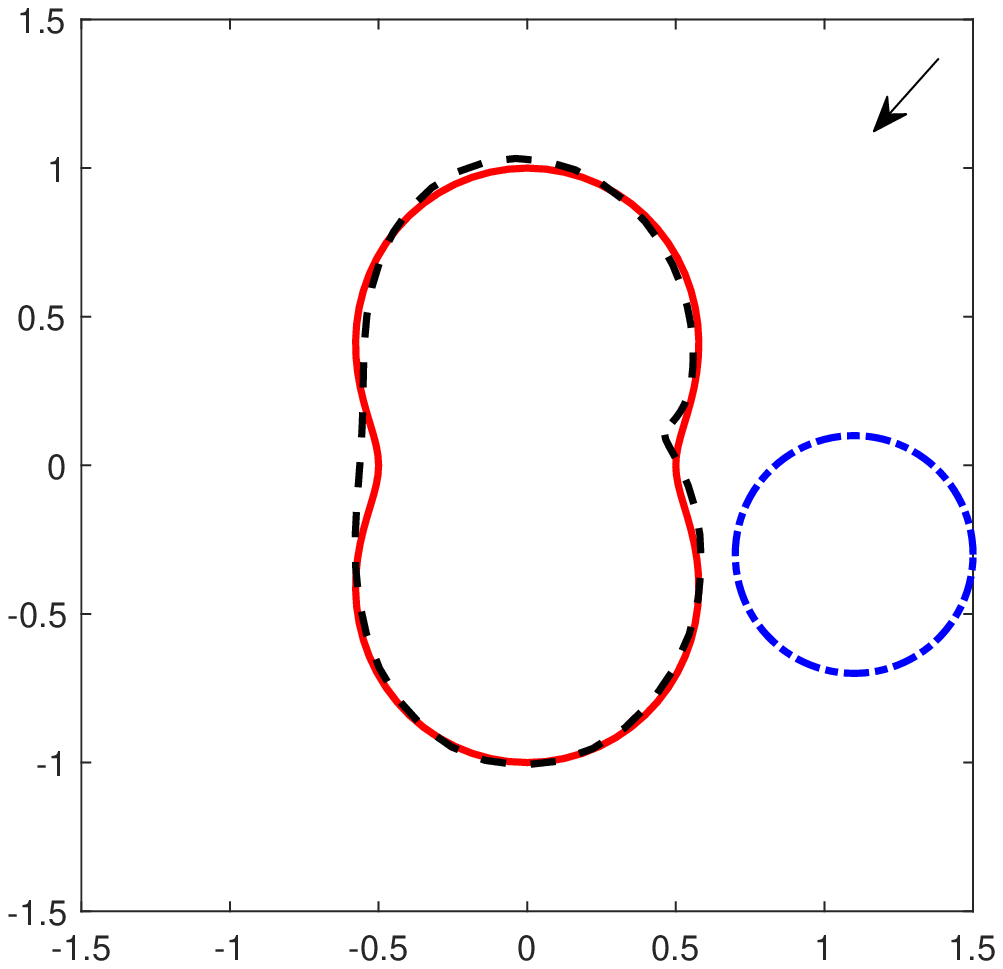}}
	\caption{Reconstructions of a peanut-shaped obstacle at different levels of noise, the radius of the initial guess is $r_0=0.4$, the position of the initial guess is $(c_1^{(0)},c_2^{(0)})=(1.1,-0.3)$ and the incident direction of the incident wave is $(\cos\theta,\sin\theta )$, $\theta=21\pi/16$.}\label{figure2}
\end{figure}
\begin{figure}[t]
	\centering
	\subfigure{\includegraphics[width=0.45\textwidth]{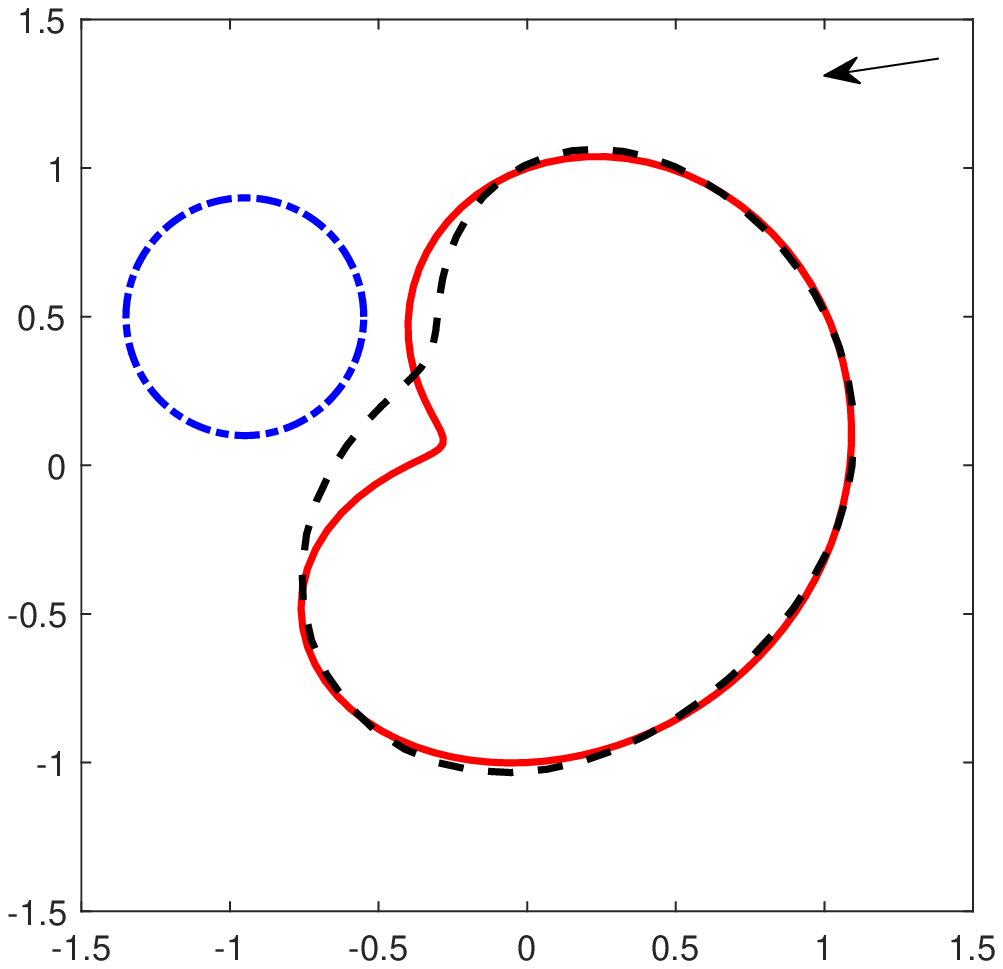}}
	\subfigure{\includegraphics[width=0.45\textwidth]{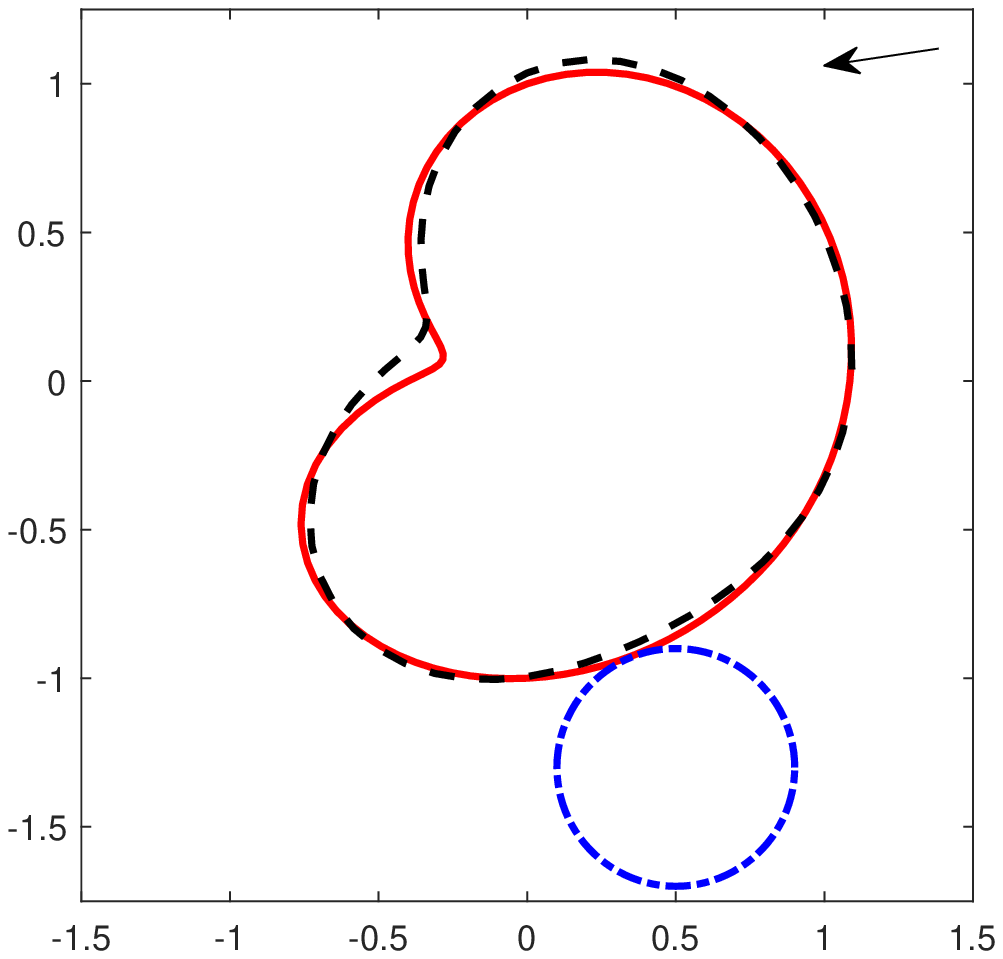}}
	\caption{Reconstructions of an apple-shaped obstacle at different positions of the initial guess $(c_1^{(0)},c_2^{(0)})=(-0.95,0.5)$$({\rm left})$ and $(c_1^{(0)},c_2^{(0)})=(0.5,-1.3)$$({\rm right})$, the radius of the initial guess is $r_0=0.4$, the noise level is 0.1$\%$ and the incident direction is $(\cos\theta,\sin\theta )$, $\theta=17\pi/16$.}\label{figure3}
\end{figure}
\begin{figure}[t]
	\centering
	\subfigure{\includegraphics[width=0.45\textwidth]{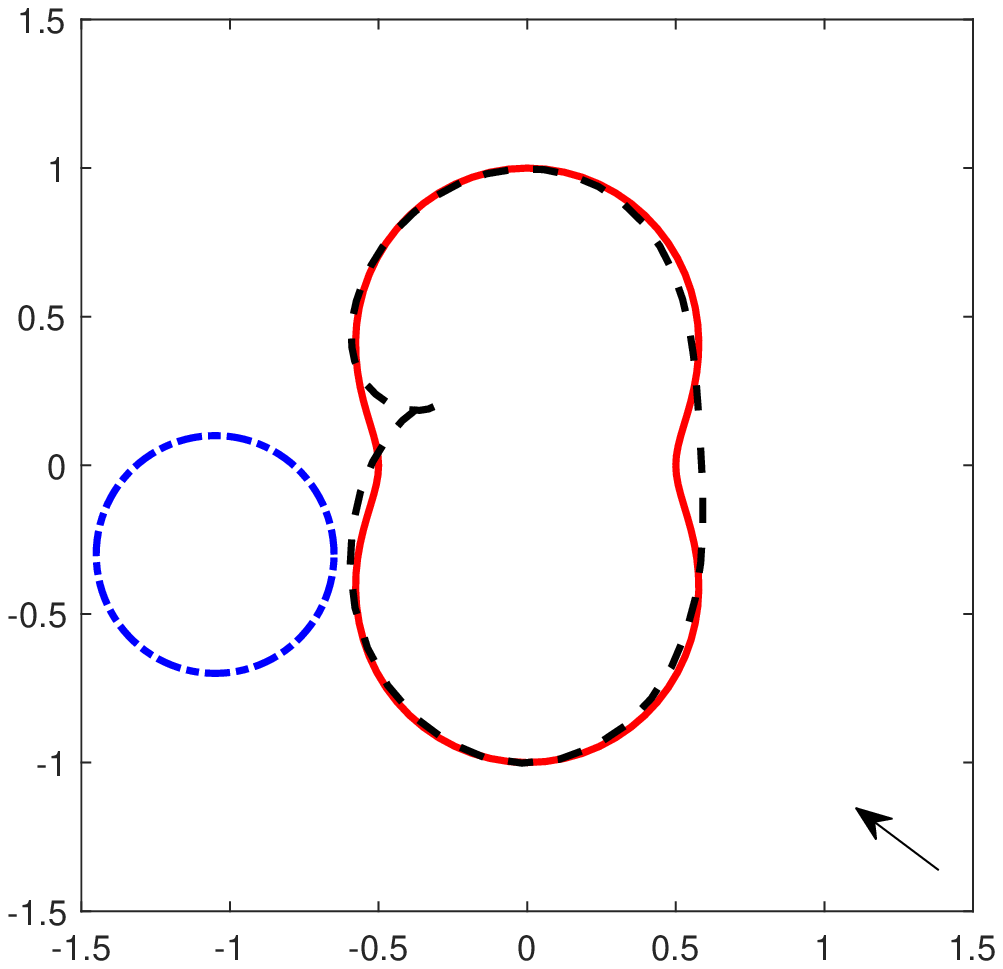}}
	\subfigure{\includegraphics[width=0.45\textwidth]{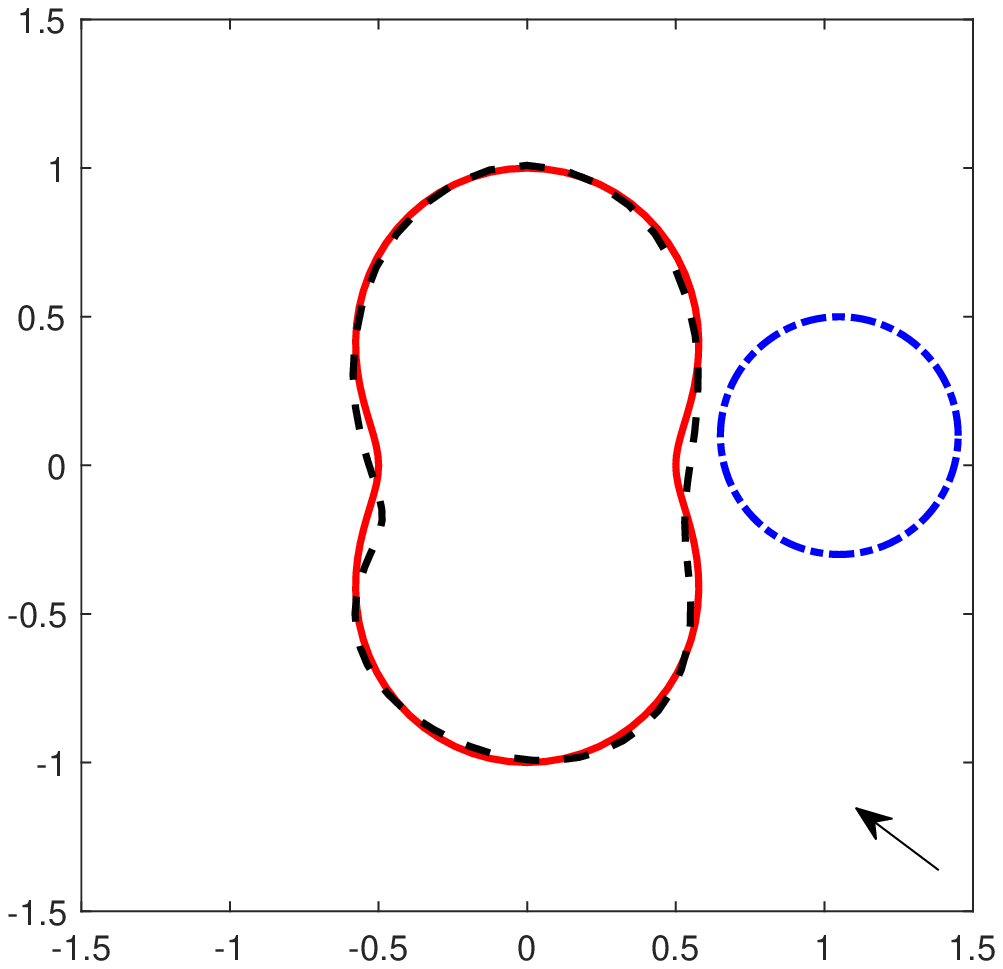}}
	\caption{Reconstructions of a peanut-shaped obstacle at different positions of the initial guess $(c_1^{(0)},c_2^{(0)})=(-1.05,-0.3)$$({\rm left})$ and $(c_1^{(0)},c_2^{(0)})=(1.05,0.1)$$({\rm right})$, the radius of the initial guess is $r_0=0.4$, the noise level is 0.1$\%$ and the incident direction is $(\cos\theta,\sin\theta )$, $\theta=3\pi/4$.}\label{figure4}
\end{figure}

\begin{figure}[t]
	\centering
	\subfigure{\includegraphics[width=0.45\textwidth]{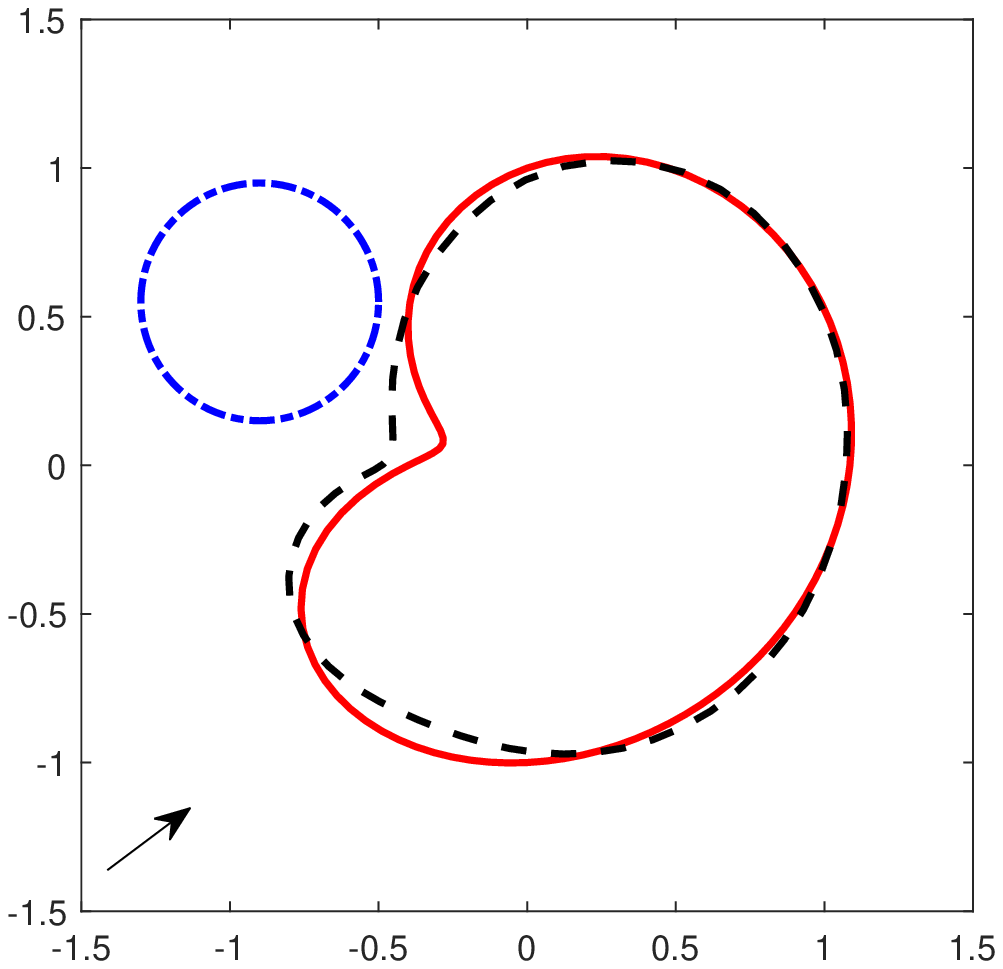}}
	\subfigure{\includegraphics[width=0.45\textwidth]{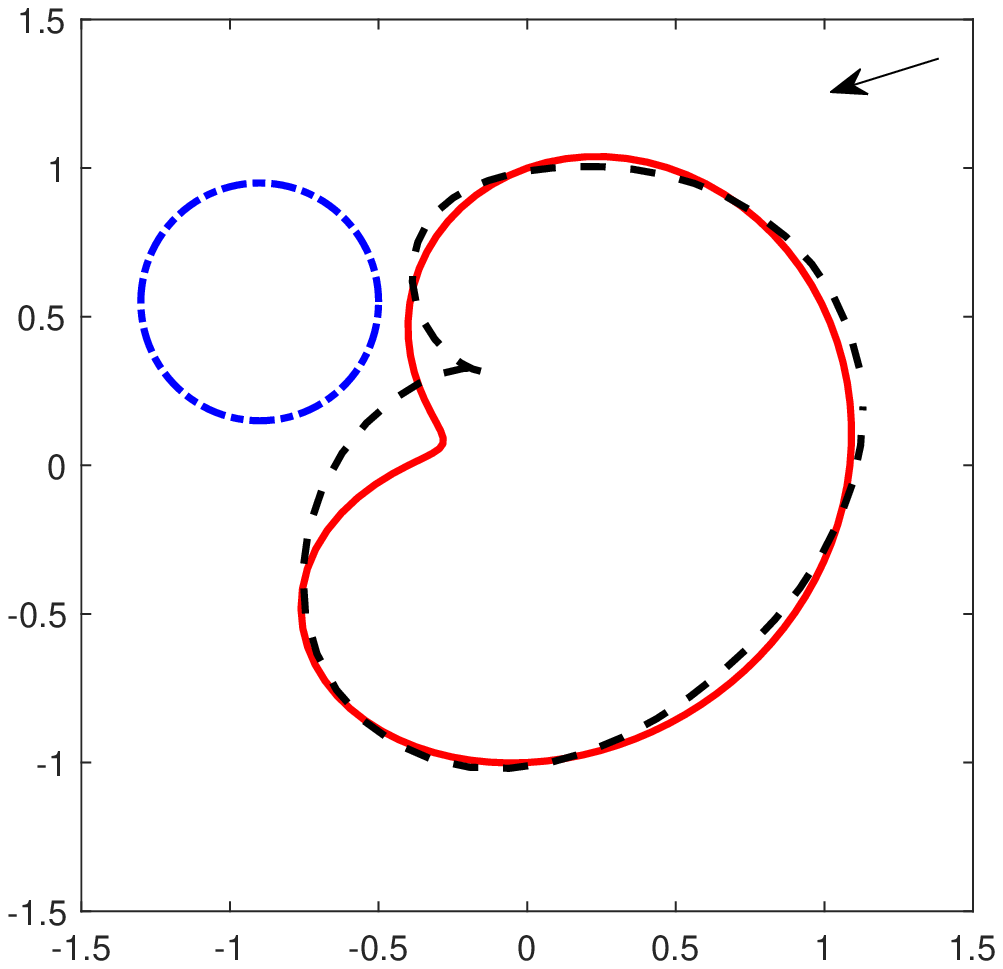}}
	\subfigure{\includegraphics[width=0.45\textwidth]{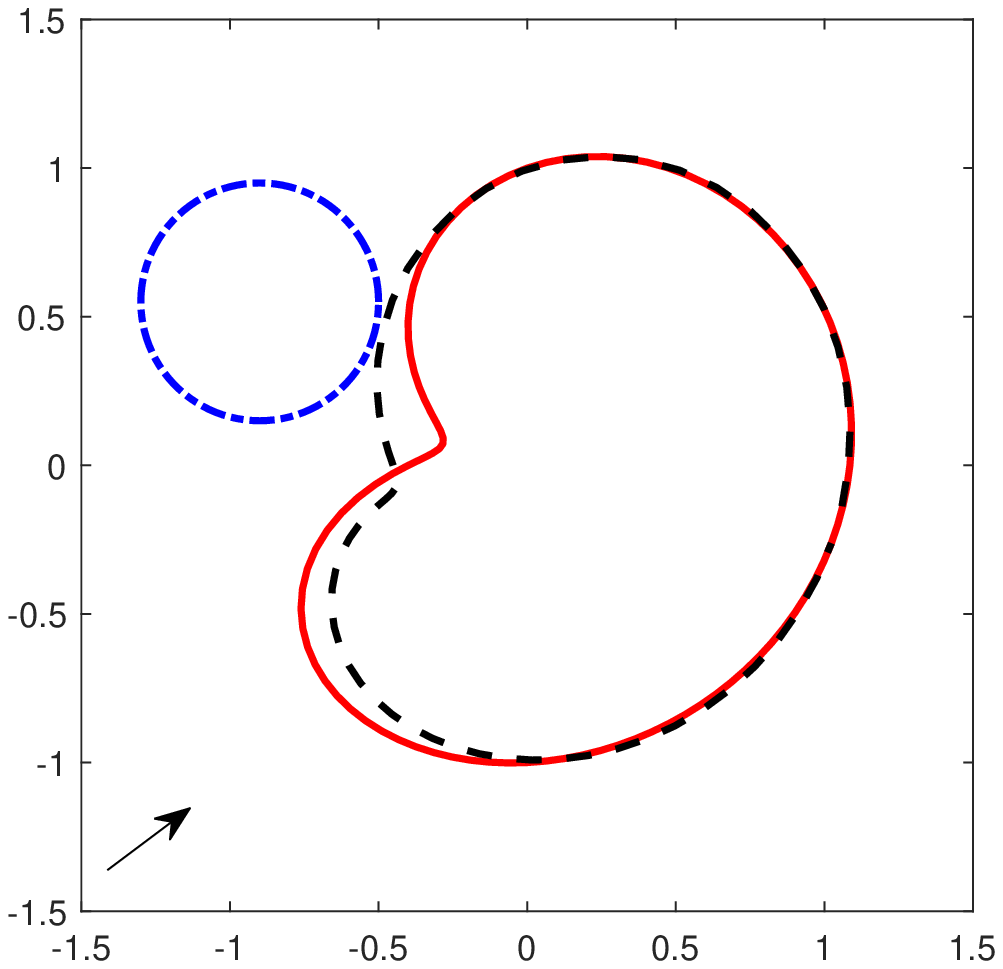}}
	\subfigure{\includegraphics[width=0.45\textwidth]{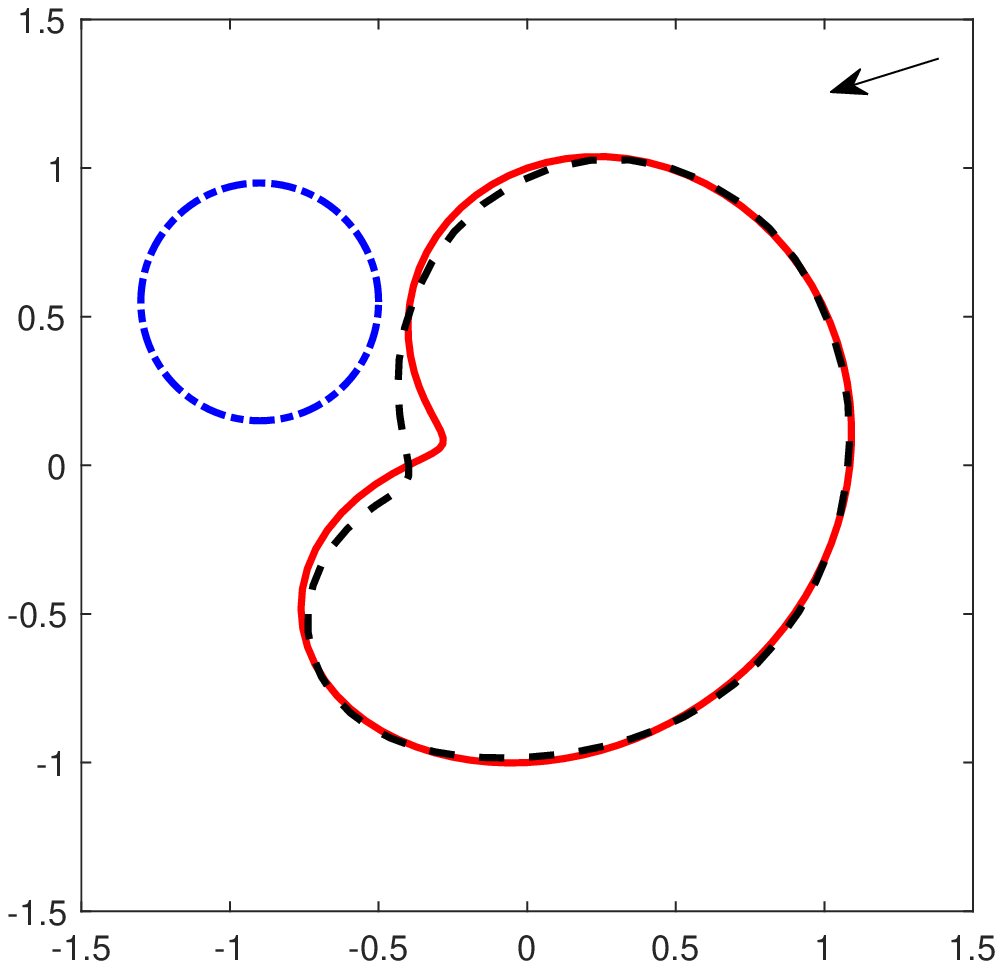}}
	\caption{Reconstructions of an apple-shaped obstacle with different incident directions $(\cos\theta,\sin\theta)$, $\theta=1\pi/4$$({\rm left})$, $\theta=9\pi/8$$({\rm right})$ or different radii of observation curve $R=2.5$$({\rm top})$, $R=2$$({\rm bottom})$. The radius of the initial guess is $r_0=0.4$, the position of the initial guess is $(c_1^{(0)},c_2^{(0)})=(-0.9,0.55)$ and the noise level is 0.1$\%$.}\label{figure5}
\end{figure}
\begin{figure}[t]
	\centering
	\subfigure{\includegraphics[width=0.45\textwidth]{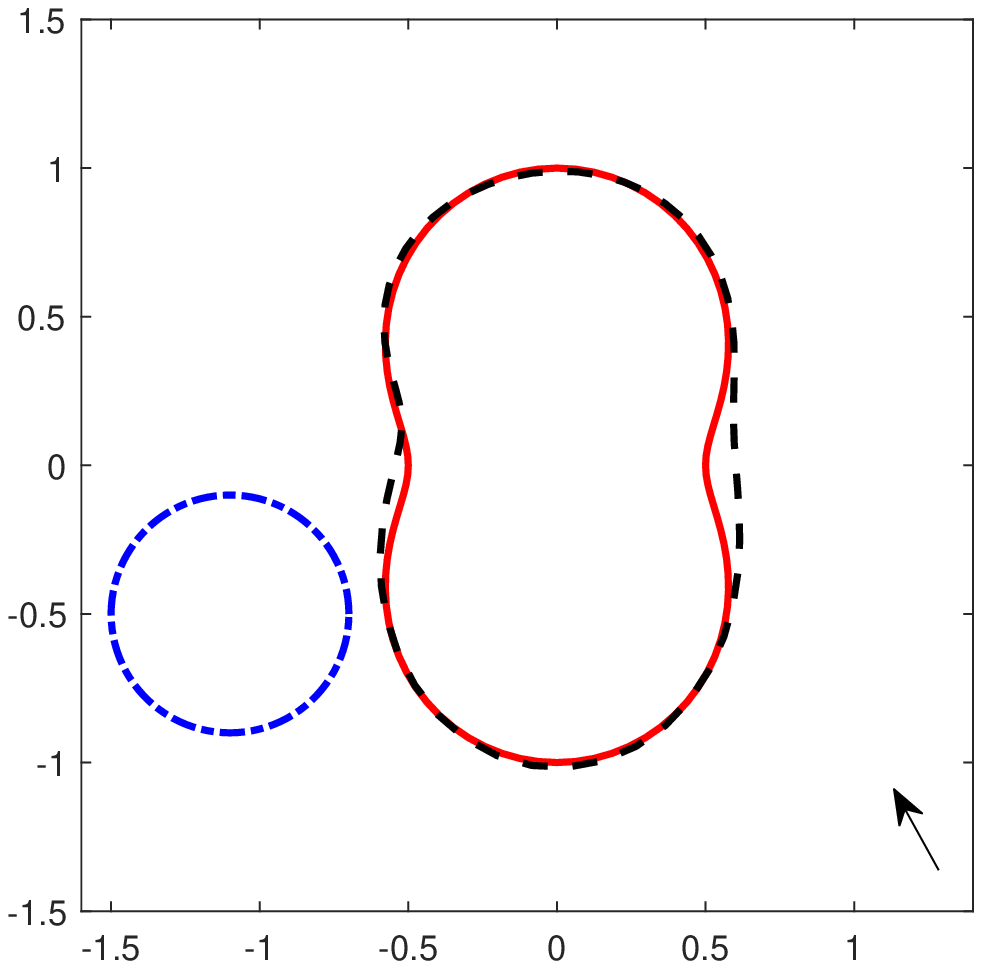}}
	\subfigure{\includegraphics[width=0.45\textwidth]{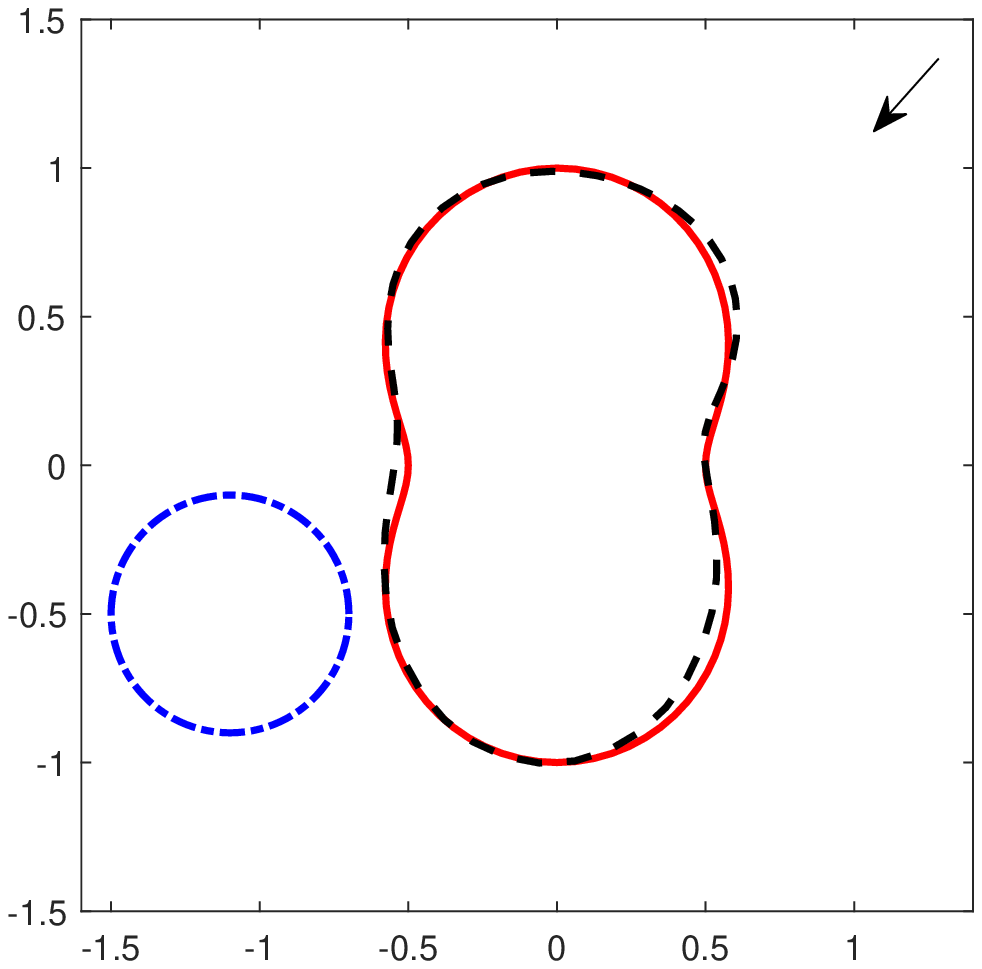}}
	\subfigure{\includegraphics[width=0.45\textwidth]{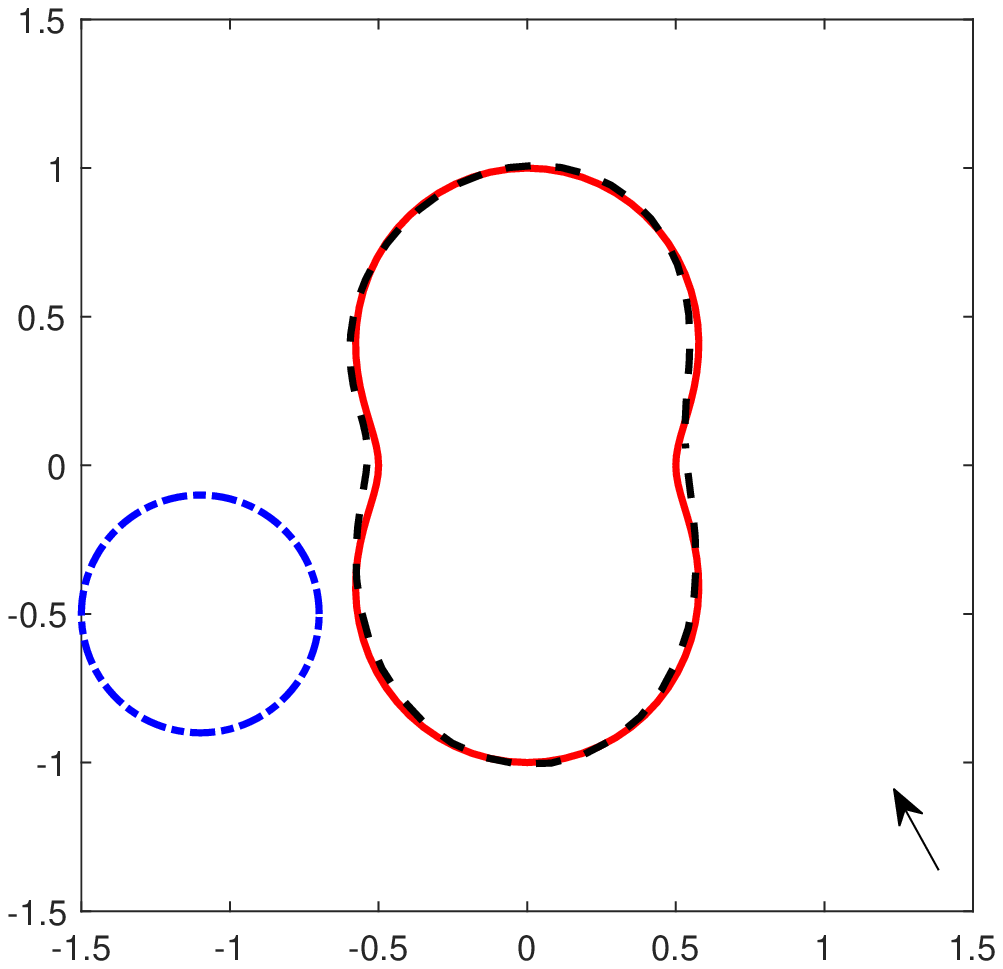}}
	\subfigure{\includegraphics[width=0.45\textwidth]{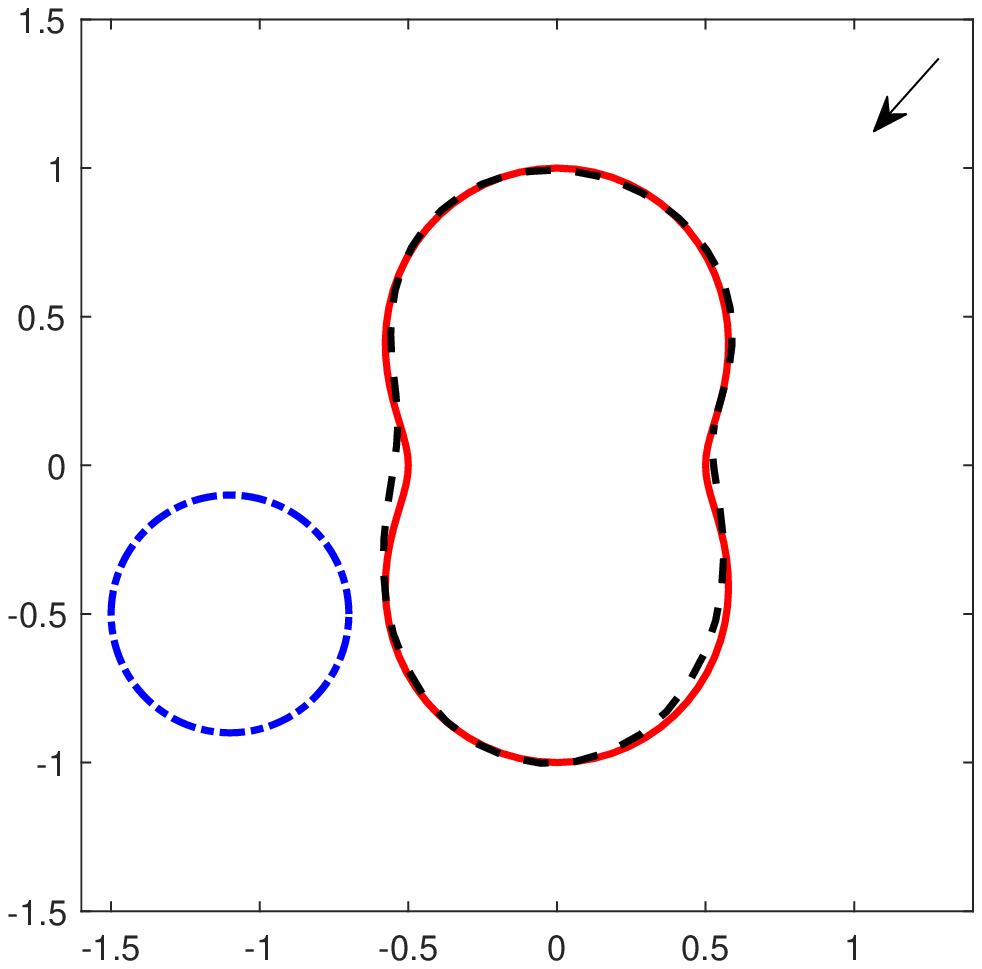}}
	\caption{Reconstructions of a peanut-shaped obstacle with different incident directions $(\cos\theta,\sin\theta)$, $\theta=5\pi/8$$({\rm left})$, $\theta=21\pi/16$$({\rm right})$ or different radii of observation curve $R=2.5$$({\rm top})$, $R=2$$({\rm bottom})$. The radius of the initial guess is $r_0=0.4$, the position of the initial guess is $(c_1^{(0)},c_2^{(0)})=(-1.1,-0.5)$ and the noise level is 0.1$\%$.}\label{figure6}
\end{figure}

To test stability, the noisy data $\boldsymbol{v}_\delta(\boldsymbol{y},t)$ is constructed in the following way:
\[
\boldsymbol{v}_\delta=\boldsymbol{v}(1+\delta\Theta),
\]
where $\Theta$ is normally distributed random numbers ranging in $[-1,1]$, $\delta>0$ is the relative noise level. In the iteration, we obtain the update $\xi$ from a scaled Newton step by using the Tikhonov regularization and $H^1$ penalty term, i.e.,
\[
\xi=\rho\bigg(\lambda_0\tilde{I}+\Re(\tilde{B}_l^*\tilde{B}_l)\bigg)^{-1}\Re(\tilde{B}_l^*\tilde{f_l}),
\]
where the scaling factor $\rho\ge 0$ is fixed throughout the iterations. Analogously to \cite{lambda}, the
regularization parameters $\lambda_0$ in \eqref{end} are chosen as
\[
\lambda_0^{(ll)}=\left\| \hat{\omega}_{l,\delta}^{\partial B_R}-AL_l(p^{(ll)},p_B,\hat{\phi}_l)\right\| _{L^2},\quad l=[ll/loop],~ll=0,1,\dots
\]

Analogously to Section 4.1 of \cite{CQ}, we adopt a strategy for the reduction of wavenumbers $s_l$. If the  $L^2$ norm of right hand $\omega_l(\theta)=[\omega_{1,l}(\theta);\omega_{2,l}(\theta)]=[-2\nu\cdot\hat{\boldsymbol{u}}_l^{\rm inc}(p_D(\theta))G_r;-2\tau\cdot\hat{\boldsymbol{u}}_l^{\rm inc}(p_D(\theta))G_r]$ is less than the tolerance: $\tilde{\epsilon}=10^{-6}$, the update $\xi=\boldsymbol{0}$. In all figures, the exact boundary curves are displayed by solid lines, the reconstructed boundary curves are depicted by dashed lines - -, and all the initial guesses are taken to be a circle with radius $r_0$ which is indicated by the dash-dotted lines · -.  In addition, we assume that $f(t)$ is a causal function and the incident wave is chosen in form of
\[
\boldsymbol{u}^{\rm inc}(\boldsymbol{x},t):=\boldsymbol{d}f(c_1t+\boldsymbol{x}\cdot\boldsymbol{d}-R_0),\quad R_0=1.2,\quad f(t):=\sin^3(3t)|_{t>0},
\]
where $\boldsymbol{d}$ denotes the incident direction marked by an arrow in the figures. In order to ensure that the energy of the scattered data inside the interested domain is negligible when $t>T$, the incident field is required to be zero when $t>5$. Throughout all the numerical examples, we take upper limit of time $T=10$, the scaling factor $\rho=0.9$, the cycle-index for per $l$, $loop=4$, and the truncation $M=3$. The Lam$\acute{\rm e}$ constants are $\lambda=3.88$, $\mu=2.56$. If there  is no special instruction, the radius of observation curve is $R=2$, and the center of observation curve is $(0,0)$. We presents the reconstruction results for two commonly used examples: an apple-shaped obstacle and a peanut-shaped obstacle. The parametrization of the exact boundary curves for these two obstacles are given in Table 1.


We investigate an inverse elastic scattering problem of reconstructing a rigid obstacle from the scattered-field data in the time domain, including the shape and the position. The reconstructions with 0.1$\%$ noise and 1$\%$ noise for the apple-shaped and peanut-shaped obstacles are shown in Figures 1 and 2, respectively. For the fixed incident direction, Figures 3 and 4 show the reconstructions of the apple-shaped and the peanut-shaped obstacles by using different initial guesses; For the same initial guess, Figures 5 and 6 show the reconstructions of the apple-shaped and the peanut-shaped obstacles by using different directions of the incident wave or different radii of observation curve. As shown in the numerical results, the shape and location of the obstacle can be satisfactorily reconstructed.
\section{Conclusions}
In this paper, we have studied the two-dimensional inverse elastic scattering problem by the time domain scattered field data for a single incident plane wave. Based on the Helmholtz decomposition, the initial-boundary value problem of the time domain Navier equation is converted into a coupled initial-boundary value problem of wave equations, and the uniqueness of the solution for this coupled problem is proved. We introduce the  single layer potential and establish coupled boundary integral equations, then we prove the uniqueness of the solution for the coupled boundary integral equations. The convolution quadrature method combined with the nonlinear integral equation method is developed for the inverse problem. Numerical examples are presented to demonstrate the effectiveness and stability of the proposed method. Future work includes the application to inverse acoustic-elastic interaction
scattering problems and other scattering models.
\newpage
\section*{Appendix}
In the Appendix, we give the detailed representation of  $\boldsymbol{L}_k^{(l)}$ and $\boldsymbol{L}_k^{(l),m}$, i.e.
\[
\boldsymbol{L}_1^{(l)}(\varsigma,\eta,c_1,c_2,\varphi_{1,l},\varphi_{2,l})=\begin{pmatrix}
	L_{1,1}^{(l)}(\varsigma,\eta,c_1,c_2,\varphi_{1,l},\varphi_{2,l})\\
	L_{1,2}^{(l)}(\varsigma,\eta,c_1,c_2,\varphi_{1,l},\varphi_{2,l})
\end{pmatrix},
\]
\[
\boldsymbol{L}_2^{(l)}(\varsigma,\eta,c_1,c_2,\varphi_{1,l},\varphi_{2,l})=\begin{pmatrix}
	L_{2,1}^{(l)}(\varsigma,\eta,c_1,c_2,\varphi_{1,l},\varphi_{2,l})\\
	L_{2,2}^{(l)}(\varsigma,\eta,c_1,c_2,\varphi_{1,l},\varphi_{2,l})
\end{pmatrix},
\]
\[\boldsymbol{L}_3^{(l),m}(\varsigma,\eta,c_1,c_2,\varphi_{1,l},\varphi_{2,l})=\begin{pmatrix}
	L_{3,1}^{(l),m}(\varsigma,\eta,c_1,c_2,\varphi_{1,l},\varphi_{2,l})\\
	L_{3,2}^{(l),m}(\varsigma,\eta,c_1,c_2,\varphi_{1,l},\varphi_{2,l})
\end{pmatrix},
\]
\[
\boldsymbol{L}_4^{(l),m}(\varsigma,\eta,c_1,c_2,\varphi_{1,l},\varphi_{2,l})=\begin{pmatrix}
	L_{4,1}^{(l),m}(\varsigma,\eta,c_1,c_2,\varphi_{1,l},\varphi_{2,l})\\
	L_{4,2}^{(l),m}(\varsigma,\eta,c_1,c_2,\varphi_{1,l},\varphi_{2,l})
\end{pmatrix},
\]
where
%
\begin{equation*}
	\begin{aligned}
		&L_{1,1}^{(l)}(\varsigma,\eta,c_1,c_2,\varphi_{1,l},\varphi_{2,l})\\
		=&\bigl\{\big( -\frac{{\rm i}s_l^2}{4c_1^2}\frac{ H_0^{(1)}({\rm i}\frac{s_l}{c_1}|p_B(\varsigma)-p_D(\eta)|)}{|p_B(\varsigma)-p_D(\eta)|^2}+\frac{s_l}{2c_1}\frac{H_1^{(1)}({\rm i}\frac{s_l}{c_1}|p_B(\varsigma)-p_D(\eta)|)}{|p_B(\varsigma)-p_D(\eta)|^3}\big)\big(b_1+R\cos\varsigma-p_1-r(\eta)\cos\eta\big)^2\varphi_{1,l}(\eta)\\
		&-\frac{s_l}{4c_1}\frac{H_1^{(1)}({\rm i}\frac{s_l}{c_1}|p_B(\varsigma)-p_D(\eta)|)}{|p_B(\varsigma)-p_D(\eta)|}\varphi_{1,l}(\eta)\\
		&+\big(-\frac{{\rm i}s_l^2}{4c_2^2}\frac{ H_0^{(1)}({\rm i}\frac{s_l}{c_2}|p_B(\varsigma)-p_D(\eta)|)}{|p_B(\varsigma)-p_D(\eta)|^2}+\frac{s_l}{2c_2}\frac{H_1^{(1)}({\rm i}\frac{s_l}{c_2}|p_B(\varsigma)-p_D(\eta)|)}{|p_B(\varsigma)-p_D(\eta)|^3}\big)\\
		&\times \big(b_2+R\sin\varsigma-p_2-r(\eta)\sin\eta\big)\big(b_1+R\cos\varsigma-p_1-r(\eta)\cos\eta\big)\varphi_{2,l}(\eta)
		\bigr\},
	\end{aligned}
\end{equation*}
\begin{equation*}
	\begin{aligned}
		&L_{2,1}^{(l)}(\varsigma,\eta,c_1,c_2,\varphi_{1,l},\varphi_{2,l})\\
		=&\bigl\{\big( -\frac{{\rm i}s_l^2}{4c_1^2}\frac{ H_0^{(1)}({\rm i}\frac{s_l}{c_1}|p_B(\varsigma)-p_D(\eta)|)}{|p_B(\varsigma)-p_D(\eta)|^2}+\frac{s_l}{2c_1}\frac{H_1^{(1)}({\rm i}\frac{s_l}{c_1}|p_B(\varsigma)-p_D(\eta)|)}{|p_B(\varsigma)-p_D(\eta)|^3}\big)\\
		&\times\big(b_2+R\sin\varsigma-p_2-r(\eta)\sin\eta\big)\big(b_1+R\cos\varsigma-p_1-r(\eta)\cos\eta\big)\varphi_{1,l}(\eta)\\
		&+\big(-\frac{{\rm i}s_l^2}{4c_2^2}\frac{ H_0^{(1)}({\rm i}\frac{s_l}{c_2}|p_B(\varsigma)-p_D(\eta)|)}{|p_B(\varsigma)-p_D(\eta)|^2}+\frac{s_l}{2c_2}\frac{H_1^{(1)}({\rm i}\frac{s_l}{c_2}|p_B(\varsigma)-p_D(\eta)|)}{|p_B(\varsigma)-p_D(\eta)|^3}\big) \big(b_2+R\sin\varsigma-p_2-r(\eta)\sin\eta\big)^2\varphi_{2,l}(\eta)\\
		&-\frac{s_l}{4c_2}\frac{H_1^{(1)}({\rm i}\frac{s_l}{c_2}|p_B(\varsigma)-p_D(\eta)|)}{|p_B(\varsigma)-p_D(\eta)|}\varphi_{2,l}(\eta)
		\bigr\},
	\end{aligned}
\end{equation*}
\begin{equation*}
	\begin{aligned}
		&L_{3,1}^{(l),m}(\varsigma,\eta,c_1,c_2,\varphi_{1,l},\varphi_{2,l})\\
		=&\bigl\{\big( -\frac{{\rm i}s_l^2}{4c_1^2}\frac{ H_0^{(1)}({\rm i}\frac{s_l}{c_1}|p_B(\varsigma)-p_D(\eta)|)}{|p_B(\varsigma)-p_D(\eta)|^2}+\frac{s_l}{2c_1}\frac{H_1^{(1)}({\rm i}\frac{s_l}{c_1}|p_B(\varsigma)-p_D(\eta)|)}{|p_B(\varsigma)-p_D(\eta)|^3}\big)\\
		&\times\big(b_1+R\cos\varsigma-p_1-r(\eta)\cos\eta\big)\big[(b_1-p_1)\cos\eta+(b_2-p_2)\sin\eta+R\cos(\varsigma-\eta)-r(\eta)\big]\cos (m\eta)\varphi_{1,l}(\eta)\\
		&-\frac{s_l}{4c_1}\frac{H_1^{(1)}({\rm i}\frac{s_l}{c_1}|p_B(\varsigma)-p_D(\eta)|)}{|p_B(\varsigma)-p_D(\eta)|}\cos\eta\cos(m\eta)\varphi_{1,l}(\eta)\\
		&+\big( -\frac{{\rm i}s_l^2}{4c_2^2}\frac{ H_0^{(1)}({\rm i}\frac{s_l}{c_2}|p_B(\varsigma)-p_D(\eta)|)}{|p_B(\varsigma)-p_D(\eta)|^2}+\frac{s_l}{2c_2}\frac{H_1^{(1)}({\rm i}\frac{s_l}{c_2}|p_B(\varsigma)-p_D(\eta)|)}{|p_B(\varsigma)-p_D(\eta)|^3}\big)\\
		&\times\big(b_2+R\sin\varsigma-p_2-r(\eta)\sin\eta\big)\big[(b_1-p_1)\cos\eta+(b_2-p_2)\sin\eta+R\cos(\varsigma-\eta)-r(\eta)\big]\cos (m\eta)\varphi_{2,l}(\eta)\\
		&-\frac{s_l}{4c_2}\frac{H_1^{(1)}({\rm i}\frac{s_l}{c_2}|p_B(\varsigma)-p_D(\eta)|)}{|p_B(\varsigma)-p_D(\eta)|}\sin\eta\cos(m\eta)\varphi_{2,l}(\eta)
		\bigr\},
	\end{aligned}
\end{equation*}
\begin{equation*}
	\begin{aligned}
		&L_{4,1}^{(l),m}(\varsigma,\eta,c_1,c_2,\varphi_{1,l},\varphi_{2,l})\\
		=&\bigl\{\big( -\frac{{\rm i}s_l^2}{4c_1^2}\frac{ H_0^{(1)}({\rm i}\frac{s_l}{c_1}|p_B(\varsigma)-p_D(\eta)|)}{|p_B(\varsigma)-p_D(\eta)|^2}+\frac{s_l}{2c_1}\frac{H_1^{(1)}({\rm i}\frac{s_l}{c_1}|p_B(\varsigma)-p_D(\eta)|)}{|p_B(\varsigma)-p_D(\eta)|^3}\big)\\
		&\times\big(b_1+R\cos\varsigma-p_1-r(\eta)\cos\eta\big)\big[(b_1-p_1)\cos\eta+(b_2-p_2)\sin\eta+R\cos(\varsigma-\eta)-r(\eta)\big]\sin (m\eta)\varphi_{1,l}(\eta)\\
		&-\frac{s_l}{4c_1}\frac{H_1^{(1)}({\rm i}\frac{s_l}{c_1}|p_B(\varsigma)-p_D(\eta)|)}{|p_B(\varsigma)-p_D(\eta)|}\cos\eta\sin(m\eta)\varphi_{1,l}(\eta)\\
		&+\big( -\frac{{\rm i}s_l^2}{4c_2^2}\frac{ H_0^{(1)}({\rm i}\frac{s_l}{c_2}|p_B(\varsigma)-p_D(\eta)|)}{|p_B(\varsigma)-p_D(\eta)|^2}+\frac{s_l}{2c_2}\frac{H_1^{(1)}({\rm i}\frac{s_l}{c_2}|p_B(\varsigma)-p_D(\eta)|)}{|p_B(\varsigma)-p_D(\eta)|^3}\big)\\
		&\times\big(b_2+R\sin\varsigma-p_2-r(\eta)\sin\eta\big)\big[(b_1-p_1)\cos\eta+(b_2-p_2)\sin\eta+R\cos(\varsigma-\eta)-r(\eta)\big]\sin (m\eta)\varphi_{2,l}(\eta)\\
		&-\frac{s_l}{4c_2}\frac{H_1^{(1)}({\rm i}\frac{s_l}{c_2}|p_B(\varsigma)-p_D(\eta)|)}{|p_B(\varsigma)-p_D(\eta)|}\sin\eta\sin(m\eta)\varphi_{2,l}(\eta)
		\bigr\},
	\end{aligned}
\end{equation*}
\begin{equation*}
	\begin{aligned}
		&L_{1,2}^{(l)}(\varsigma,\eta,c_1,c_2,\varphi_{1,l},\varphi_{2,l})\\
		=&\bigl\{\big( -\frac{{\rm i}s_l^2}{4c_1^2}\frac{ H_0^{(1)}({\rm i}\frac{s_l}{c_1}|p_B(\varsigma)-p_D(\eta)|)}{|p_B(\varsigma)-p_D(\eta)|^2}+\frac{s_l}{2c_1}\frac{H_1^{(1)}({\rm i}\frac{s_l}{c_1}|p_B(\varsigma)-p_D(\eta)|)}{|p_B(\varsigma)-p_D(\eta)|^3}\big)\\
		&\times\big(b_2+R\sin\varsigma-p_2-r(\eta)\sin\eta\big)\big(b_1+R\cos\varsigma-p_1-r(\eta)\cos\eta\big)\varphi_{1,l}(\eta)\\
		&-\big(-\frac{{\rm i}s_l^2}{4c_2^2}\frac{ H_0^{(1)}({\rm i}\frac{s_l}{c_2}|p_B(\varsigma)-p_D(\eta)|)}{|p_B(\varsigma)-p_D(\eta)|^2}+\frac{s_l}{2c_2}\frac{H_1^{(1)}({\rm i}\frac{s_l}{c_2}|p_B(\varsigma)-p_D(\eta)|)}{|p_B(\varsigma)-p_D(\eta)|^3}\big) \big(b_1+R\cos\varsigma-p_1-r(\eta)\cos\eta\big)^2\varphi_{2,l}(\eta)\\
		&+\frac{s_l}{4c_2}\frac{H_1^{(1)}({\rm i}\frac{s_l}{c_2}|p_B(\varsigma)-p_D(\eta)|)}{|p_B(\varsigma)-p_D(\eta)|}\varphi_{2,l}(\eta)
		\bigr\},
	\end{aligned}
\end{equation*}
\begin{equation*}
	\begin{aligned}
		&L_{2,2}^{(l)}(\varsigma,\eta,c_1,c_2,\varphi_{1,l},\varphi_{2,l})\\
		=&\bigl\{\big( -\frac{{\rm i}s_l^2}{4c_1^2}\frac{ H_0^{(1)}({\rm i}\frac{s_l}{c_1}|p_B(\varsigma)-p_D(\eta)|)}{|p_B(\varsigma)-p_D(\eta)|^2}+\frac{s_l}{2c_1}\frac{H_1^{(1)}({\rm i}\frac{s_l}{c_1}|p_B(\varsigma)-p_D(\eta)|)}{|p_B(\varsigma)-p_D(\eta)|^3}\big)\big(b_2+R\sin\varsigma-p_2-r(\eta)\sin\eta\big)^2\varphi_{1,l}(\eta)\\
		&-\frac{s_l}{4c_1}\frac{H_1^{(1)}({\rm i}\frac{s_l}{c_1}|p_B(\varsigma)-p_D(\eta)|)}{|p_B(\varsigma)-p_D(\eta)|}\varphi_{1,l}(\eta)\\
		&-\big(-\frac{{\rm i}s_l^2}{4c_2^2}\frac{ H_0^{(1)}({\rm i}\frac{s_l}{c_2}|p_B(\varsigma)-p_D(\eta)|)}{|p_B(\varsigma)-p_D(\eta)|^2}+\frac{s_l}{2c_2}\frac{H_1^{(1)}({\rm i}\frac{s_l}{c_2}|p_B(\varsigma)-p_D(\eta)|)}{|p_B(\varsigma)-p_D(\eta)|^3}\big)\\
		&\times \big(b_2+R\sin\varsigma-p_2-r(\eta)\sin\eta\big)\big(b_1+R\cos\varsigma-p_1-r(\eta)\cos\eta\big)\varphi_{2,l}(\eta)
		\bigr\},
	\end{aligned}
\end{equation*}
\begin{equation*}
	\begin{aligned}
		&L_{3,2}^{(l),m}(\varsigma,\eta,c_1,c_2,\varphi_{1,l},\varphi_{2,l})\\
		=&\bigl\{\big( -\frac{{\rm i}s_l^2}{4c_1^2}\frac{ H_0^{(1)}({\rm i}\frac{s_l}{c_1}|p_B(\varsigma)-p_D(\eta)|)}{|p_B(\varsigma)-p_D(\eta)|^2}+\frac{s_l}{2c_1}\frac{H_1^{(1)}({\rm i}\frac{s_l}{c_1}|p_B(\varsigma)-p_D(\eta)|)}{|p_B(\varsigma)-p_D(\eta)|^3}\big)\\
		&\times\big(b_2+R\sin\varsigma-p_2-r(\eta)\sin\eta\big)\big[(b_1-p_1)\cos\eta+(b_2-p_2)\sin\eta+R\cos(\varsigma-\eta)-r(\eta)\big]\cos (m\eta)\varphi_{1,l}(\eta)\\
		&-\frac{s_l}{4c_1}\frac{H_1^{(1)}({\rm i}\frac{s_l}{c_1}|p_B(\varsigma)-p_D(\eta)|)}{|p_B(\varsigma)-p_D(\eta)|}\sin\eta\cos(m\eta)\varphi_{1,l}(\eta)\\
		&-\big( -\frac{{\rm i}s_l^2}{4c_2^2}\frac{ H_0^{(1)}({\rm i}\frac{s_l}{c_2}|p_B(\varsigma)-p_D(\eta)|)}{|p_B(\varsigma)-p_D(\eta)|^2}+\frac{s_l}{2c_2}\frac{H_1^{(1)}({\rm i}\frac{s_l}{c_2}|p_B(\varsigma)-p_D(\eta)|)}{|p_B(\varsigma)-p_D(\eta)|^3}\big)\\
		&\times\big(b_1+R\cos\varsigma-p_1-r(\eta)\cos\eta\big)\big[(b_1-p_1)\cos\eta+(b_2-p_2)\sin\eta+R\cos(\varsigma-\eta)-r(\eta)\big]\cos (m\eta)\varphi_{2,l}(\eta)\\
		&+\frac{s_l}{4c_2}\frac{H_1^{(1)}({\rm i}\frac{s_l}{c_2}|p_B(\varsigma)-p_D(\eta)|)}{|p_B(\varsigma)-p_D(\eta)|}\cos\eta\cos(m\eta)\varphi_{2,l}(\eta)
		\bigr\},
	\end{aligned}
\end{equation*}
\begin{equation*}
	\begin{aligned}
		&L_{4,2}^{(l),m}(\varsigma,\eta,c_1,c_2,\varphi_{1,l},\varphi_{2,l})\\
		=&\bigl\{\big( -\frac{{\rm i}s_l^2}{4c_1^2}\frac{ H_0^{(1)}({\rm i}\frac{s_l}{c_1}|p_B(\varsigma)-p_D(\eta)|)}{|p_B(\varsigma)-p_D(\eta)|^2}+\frac{s_l}{2c_1}\frac{H_1^{(1)}({\rm i}\frac{s_l}{c_1}|p_B(\varsigma)-p_D(\eta)|)}{|p_B(\varsigma)-p_D(\eta)|^3}\big)\\
		&\times\big(b_2+R\sin\varsigma-p_2-r(\eta)\sin\eta\big)\big[(b_1-p_1)\cos\eta+(b_2-p_2)\sin\eta+R\cos(\varsigma-\eta)-r(\eta)\big]\sin (m\eta)\varphi_{1,l}(\eta)\\
		&-\frac{s_l}{4c_1}\frac{H_1^{(1)}({\rm i}\frac{s_l}{c_1}|p_B(\varsigma)-p_D(\eta)|)}{|p_B(\varsigma)-p_D(\eta)|}\sin\eta\sin(m\eta)\varphi_{1,l}(\eta)\\
		&-\big( -\frac{{\rm i}s_l^2}{4c_2^2}\frac{ H_0^{(1)}({\rm i}\frac{s_l}{c_2}|p_B(\varsigma)-p_D(\eta)|)}{|p_B(\varsigma)-p_D(\eta)|^2}+\frac{s_l}{2c_2}\frac{H_1^{(1)}({\rm i}\frac{s_l}{c_2}|p_B(\varsigma)-p_D(\eta)|)}{|p_B(\varsigma)-p_D(\eta)|^3}\big)\\
		&\times\big(b_1+R\cos\varsigma-p_1-r(\eta)\cos\eta\big)\big[(b_1-p_1)\cos\eta+(b_2-p_2)\sin\eta+R\cos(\varsigma-\eta)-r(\eta)\big]\sin (m\eta)\varphi_{2,l}(\eta)\\
		&+\frac{s_l}{4c_2}\frac{H_1^{(1)}({\rm i}\frac{s_l}{c_2}|p_B(\varsigma)-p_D(\eta)|)}{|p_B(\varsigma)-p_D(\eta)|}\cos\eta\sin(m\eta)\varphi_{2,l}(\eta)
		\bigr\}.
	\end{aligned}
\end{equation*}

\end{document}